\theoremstyle{plain}
\numberwithin{equation}{section} \numberwithin{figure}{section}
\newtheorem{theorem}{Theorem}[section]
\newtheorem{lemma}[theorem]{Lemma}
\newtheorem{corollary}[theorem]{Corollary}
\newtheorem{definition}[theorem]{Definition}
\theoremstyle{definition}
\newtheorem{remark}[theorem]{Remark}
\newtheorem{example}[theorem]{Example}
\begin{document}
\title[discrete energies on prestrained bodies] {On the variational
  limits of lattice energies on prestrained elastic bodies}
\author{Marta Lewicka and Pablo Ochoa}
\address{Marta Lewicka, University of Pittsburgh, Department of Mathematics, 
139 University Place, Pittsburgh, PA 15260}
\address{Pablo Ochoa, University of Pittsburgh, Department of Mathematics, 
139 University Place, Pittsburgh, PA 15260; and 
I.C.B. Universidad Nacional de Cuyo} 
\email{lewicka@pitt.edu, pdo2@pitt.edu}

\date{\today}
\begin{abstract} 
We study the asymptotic behaviour of the discrete elastic energies in
presence of the prestrain metric $G$, assigned on the continuum reference
configuration $\Omega$.
When the mesh size of the discrete lattice in $\Omega$ goes to zero,
we obtain the variational bounds on 
the limiting (in the sense of $\Gamma$-limit) energy. In case of
the nearest-neighbour and next-to-nearest-neighbour interactions, we derive a precise asymptotic formula,
and compare it with the non-Euclidean model energy relative to $G$.
\end{abstract}

\maketitle

\section{Introduction}

Recently, there has been a growing interest in the study of
prestrained materials, i.e. materials which assume non-trivial rest configurations  
in the absence of exterior forces or boundary conditions. This
phenomenon has been observed in contexts such as: naturally
growing tissues, torn plastic sheets, specifically engineered polymer
gels, and many others.
The basic mathematical model, called ``incompatible elasticity'' has been put forward
in \cite{RHM, Experiment1, Experiment} and further studied in \cite{LP,
  a1, a2, a4, lm1, lm2, k0, k1, k2, k3}.
In this paper, we pose the following question: is it possible to
derive an equivalent continuum mechanics model starting from an
appropriate discrete description, by means of a homogenization procedure when
the mesh size goes to $0$?  Discrete-to-continuum limits of this
type have been investigated by means of $\Gamma$-convergence in a
number of areas of application, including nonlinear elasticity
\cite{AC, Raoult, r, r1, s2, s1} and others (see for example \cite{r2,
  r3, r4, ss}).

Discrete lattices may model both the atomic structures
and mechanical trusses. The latter case is not restricted to classical
material mechanics but it also encompasses biological tissues. 
For instance, in the cell-to-muscle homogenization problem \cite{cmr,
  cmr2, mphd}, the muscle tissue of the heart, which forms a thick
middle (myocardial) layer between the
outer epicardium and the inner endocardium layers,  
is regarded as a set of basic nodes and fibers suitably
arranged.  The myocardial fibers consist of myocytes; these are 
elongated structures which can undergo further
elongation/traction as well as angle interaction.
It is possile to reconstruct an elastic law for the myocardium from the 
known behavior of the myocytes \cite{cmr, cmr2},  and the obtained results are 
consistent with the experimental measurements in the physiological
literature.
Further observations \cite{pcom} confirm that there should be a spatial
heterogeneity in the myocardium cells, as a consequence of the
temporal heterogeneity. Nevertheless, so far measurements were not precise 
enough (due to the noise in the diffraction techniques) to give distinct values,
and therefore most of the time heterogeneity has been left aside in prior works. 

The  analysis in the present paper investigates the relation of the continuum limit of the atomistic models
taking into account the weighted pairwise interactions of nodes in the
lattice, with the continuum elastic energy 
where all possible interactions are taken into account. We show that, although
the limit model inherits the same structure of the 
continuum energy, the two models differ by: (i) the relaxation in the density potential
which, as one naturally expects, is the quasiconvexification of
the original density, and (ii) the new ``incompatibility'' metric represented by
 the superposition of traces of the original incompatibility metric, along
the admissible directions of interaction.

\subsection{The continuum model $\mathcal{E}$}

We now introduce and explain the models involved.
The ``incompatible elasticity'' postulates that the three-dimensional body seeks to realize a configuration
with a prescribed Riemannian metric $G$, and that the resulting
deformation minimizes the energy $\mathcal{E}$ which in turn measures
the deviation of a given deformation from being an
orientation-preserving isometric immersion of $G$.
More precisely,  let $G$ be a smooth Riemannian metric on
an open, bounded, connected domain $\Omega\subset\mathbb{R}^n$,
i.e. $G\in \mathcal{C}^\infty (\bar\Omega, \mathbb{R}^{n\times n})$ and
$G(x)$ is symmetric positive definite for every $x\in \bar\Omega$.  
The shape change that occurs during the growth of $\Omega$ is due to
changes in the local stress-free state (for instance,
material may be added or removed), and to the accommodation of these
changes. Consequently, the gradient of the 
deformation $u: \Omega \rightarrow \mathbb{R}^{n}$ that maps the
original stress-free state to the observed  
state,  can be decomposed as:
\begin{equation*}
 \nabla u = F A,
\end{equation*}
into the growth deformation tensor $A: \Omega \rightarrow
\mathbb{R}^{n \times n}$,  describing the growth from the reference
zero-stress state to a new locally stress-free state, and the 
elastic deformation tensor $F$. The elastic energy $\mathcal{E}$ is
then given in terms of $F$, by:
\begin{equation}\label{E} 
\mathcal{E}(u) = \int_{\Omega} \overline{W}(F)~\mbox{d}x = \int_{\Omega}
\overline{W}(\nabla u A^{-1})~\mbox{d}x. 
\end{equation}
Here, the density potential $\overline{W}: \mathbb{R}^{n \times n}
\rightarrow \overline{\mathbb{R}}_+$ satisfies the following assumptions of frame
invariance with respect to the group of proper rotations $SO(n)$,
normalization, and non-degeneracy:
\begin{equation}\label{assu}
\forall F \in \mathbb{R}^{n \times
  n}, R \in SO(n) \qquad \overline{W}(RF) = \overline{W}(F),\quad \overline{W}(R) = 0,\quad 
\overline{W}(F) \geq c ~\mbox{dist}^{2}(F, SO(n)), 
\end{equation}
for some uniform constant $c >0$.

Observe that: $\mathcal{E}(u)=0$ is equivalent to $\nabla u(x) \in
SO(n)A(x)$ for almost every $x\in\Omega$. Further, in view of the polar
decomposition theorem, the same condition is
equivalent to: $(\nabla u)^{T}\nabla u = A^{T}A$ and $\det \nabla u >
0$ in $\Omega$, i.e. $\mathcal{E}(u)=0$ if and only if $u$ is an isometric immersion of the imposed
Riemannian metric $G = A^{T} A$. Hence, when 
$G$ is not realizable (i.e. when its Riemann curvature tensor does not
vanish identically in $\Omega$), there is no $u$ with
$\mathcal{E}(u)=0$. It has further been proven in \cite{LP} that in this case:
$\inf\{ \mathcal{E}(u); ~ u\in W^{1,2}(\Omega, \mathbb{R}^n\}>0$ as well, which points to the existence of residual
non-zero strain at free equilibria of $\mathcal{E}$.

Given $G$, we will call $A=\sqrt{G}$, and without loss of generality we always
assume that $A$ is symmetric and strictly positive definite in
$\Omega$.

\subsection{The discrete model $E_\epsilon$}

We now describe the discrete model whose asymptotic
behavior we intend to study.
The total stored discrete energy of a given deformation acting on the
atoms of the lattice in $\Omega$, is defined to be 
the superposition of the energies weighting the pairwise
interactions between the atoms, with respect  to
$G$. More precisely, given $\epsilon>0$ and a discrete map
$u_\epsilon:\epsilon\mathbb{Z}^n\cap\Omega \rightarrow \mathbb{R}^n$, let:
\begin{equation}\label{discrete_energy}
E_\epsilon (u_\epsilon) = \sum_{\xi \in \mathbb{Z}^{n}}\sum_{\alpha \in R^{\xi}_\epsilon (\Omega)} 
\epsilon^{n} \psi (\vert \xi \vert) \Big \vert \dfrac{\vert u_\epsilon(\alpha + \epsilon \xi) - 
u_\epsilon (\alpha) \vert}{\epsilon \vert A(\alpha)\xi \vert} - 1 \Big \vert ^{2},
\end{equation}
where $R_\epsilon^{\xi}(\Omega)= \{\alpha \in \epsilon \mathbb{Z}^{n}
: [\alpha, \alpha + \epsilon \xi] \subset \Omega\}$ denotes the set of
lattice points interacting with the node $\alpha$, and
where a smooth cut-off function
$\psi:\mathbb{R}_+\rightarrow \mathbb{R}$ allows only for interactions
with finite range: 
$$\psi(0) = 0 \quad \mbox{ and }\quad \exists M>0 \quad \forall n \geq M \quad \psi(n) = 0.$$
The energy in (\ref{discrete_energy}) measures the discrepancy between
lengths of the actual displacements between the nodes
$x=\alpha+\epsilon\xi$ and $y=\alpha$ due to the deformation $u_\epsilon$,
and the ideal displacement length $\langle G(\alpha) (x-y),
(x-y)\rangle^{1/2} = \epsilon |A(\alpha)\xi|.$
Note that the measure of this dicrepancy in terms of the ratio
$\frac{l}{l_0}$ of the actual length $l = \vert u_\epsilon(\alpha + \epsilon \xi) - 
u_\epsilon (\alpha) \vert $ and the ideal length $l_0 = \epsilon \vert A(\alpha)\xi \vert$ 
is present in the reconstruction of an elastic law for the myocardium from the 
known behavior of the myocytes in \cite{cmr} (formula (11)).

When $\epsilon\to 0$ and when sampling on sufficiently many
interaction directions $\xi$, one might expect that (\ref{discrete_energy}) will
effectively measure the discrepancy between all lengths $|u(x) -
u(y)|$ and the ideal lengths $|A(x) (x-y)|$ determined by the imposed
metric, as in (\ref{E}).
For $G=\mbox{Id}$, it has been proven in \cite{AC} that this is indeed
the case, as well as that the
$\Gamma$-limit $\mathcal{F}$ of $E_\epsilon$ has the form: $\mathcal{F}(u)
= \int_\Omega f(\nabla u)~\mbox{d}x$ with the limiting density $f$ frame invariant and quasiconvex.

\subsection{The main results and the organization of the paper}

Towards studying the energies (\ref{discrete_energy}),  we first derive an 
integral representation for $E_\epsilon$ by introducing a family of lattices determined by each
length of the admissible interactions (when $\psi\neq 0$); this is
done in sections \ref{jeden} and \ref{rep3}. Since the general formula
for the integral representation 
uses quite involved notation, we
first present its simpler versions, valid in cases of the nearest-neighbour and
next-to-nearest-neighbour interactions. For each lattice, we define its
$n$-dimensional triangulation and, as usual in the lattice analysis, we
associate with it the piecewise affine maps matching with the original discrete
deformations at each node. 

In section \ref{sec4} we derive the lower
and upper bounds $I_Q$ and $I$ of the $\Gamma$-limit $\mathcal{F}$ of $E_\epsilon$, as $\epsilon\to
0$, in terms of the superposition of integral energies defined
effectively on the $W^{1,2}$ deformations of $\Omega$.
The disparity between the upper and lower bounds reflects the fact
that each lattice in the discrete description gives rise, in general, to a
distinct recovery sequence of the associated $\Gamma$-limit.
This is hardly surprising, since the operation of taking the lsc envelope of an integral energy is not
additive (nor is the operation of quasiconvexifcation of its density).

On the other hand, each term in $I_Q$ and $I$ has the structure as in
(\ref{E}), but with $G$ replaced by other effective metric induced 
by the distinct lattices. In case of only nearest-neighbour or
next-to-nearest-neighbour interactions all the effective metrics coincide with one
residual metric $\bar G$. This further
allows to obtain the formula for $\mathcal{F}$, which is accomplished in
section \ref{sec5}. In section \ref{sec6} we compare $\mathcal{F}$
with $\mathcal{E}$ through a series of examples. We note, in particular, that the realisability of $G$
does not imply the realisability of $\bar G$, neither the converse of
this statement is true.

Finally, in the Appendix section \ref{secap} we gather some
classical facts on $\Gamma$ convergence and convexity, which we use in
the proofs of this note.

\medskip

Let us conclude by remarking  that a continuum finite range interaction model,
in the spirit of (\ref{discrete_energy}), can be posed similarly to the
models considered recently in \cite{b1, b2, m}, by:
$$\tilde E_\epsilon(u) = \int_\Omega\int_\Omega \psi\Big(\frac{|x-y|}{\epsilon}\Big)
\left|\frac{|u(x) - u(y)|}{|A(x)(x-y)|} -1\right|^2~\mbox{d}x\mbox{d}y.$$
It would be interesting to find the $\Gamma$-limit of $\tilde
E_\epsilon$, as $\epsilon\to 0$ and compare it with both $\mathcal{E}$
and $\mathcal{F}$.

\subsection{Notation}
Throughout the paper, $\Omega$ is an open bounded subset of
$\mathbb{R}^n$. For $s>0$, we denote:
$$ \Omega_s = \{x \in \Omega; ~~ \mbox{dist}(x, \partial \Omega) > s\}.$$

The standard triangulation of the $n$-dimensional cube $C_n = [0, 1]^{n}$ 
is defined as follows.
For all permutations $\pi\in S_n$ of $n$ elements, let $T^\pi$ be the $n$-simplex obtained by:
\begin{equation*}
T^\pi = \{(x_1, ..., x_n)\in C_n; ~~ x_{\pi(1)} \geq \cdots \geq x_{\pi(n)}\}.
\end{equation*}
Note that $T^\pi$ is the convexification of its vertices:
\begin{eqnarray*}
T^\pi = \mbox{conv} \Big\{0, e_{\pi(1)}, e_{\pi(1)} + e_{\pi(2)},\ldots, e_{\pi(1)} + \cdots +  e_{\pi(n)} = e_1+ \cdots + e_n\Big\},
\end{eqnarray*} 
and that all simplices $T^\pi$ have $0$ and $(1,\ldots,1)=e_1+\ldots +
e_n$ as common vertices.
The collection of $n!$ simplices $\{T^\pi\}_{\pi\in S_n}$ constitutes the standard triangulation of $C_n$,
which can also be naturally extended to each cell $\alpha+\epsilon
C_n$ where $\alpha\in\epsilon\mathbb{Z}^n$:
$$ T_\alpha^\pi = \mbox{conv} \Big\{\alpha, \big\{\alpha + \epsilon \sum_{i=1}^je_{\pi(i)}\big\}_{j=1}^n\Big\}.  $$
When $\pi=(i_1,\ldots ,i_n)$ we shall also write
$T_\alpha^{(i_1,\ldots, i_n)} = T_\alpha^\pi = \mbox{conv}
\Big\{\alpha, \big\{\alpha + \epsilon
\sum_{k=1}^je_{i_k}\big\}_{j=1}^n\Big\}.  $
Moreover, we call:
\begin{equation}\label{form}
\mathcal{T}_{\epsilon, n} = \{T_\alpha^\pi; ~~ \alpha\in\epsilon\mathbb{Z}^n, ~ \pi\in S_n\}.
\end{equation}

Finally, by $C$ we denote any universal constant, depending on $\Omega$ and $W$,
but independent of other involved quantities at hand.

\bigskip

\noindent{\bf Acknowledgments.} 
M.L. was partially supported by the NSF Career grant DMS-0846996.

\section{Integral representation of discrete energies
  (\ref{discrete_energy}) - special cases}\label{jeden}

Since the general formula for the integral representation of 
$E_\epsilon$, given in section \ref{rep3}, uses a somewhat involved
notation which may obscure the construction, we
first present its simpler versions, valid in cases of the near and
next-to-nearest-neighbour interactions, which we further discuss in sections
\ref{sec5} and \ref{sec6}.

\subsection{Case 1: nearest-neighbour interactions in $\mathbb{R}^2$}
Let $\Omega\subset\mathbb{R}^2$ and assume that $\psi(1)=1$ and $\psi(|\xi|) =0$ for $|\xi|\geq\sqrt{2}$.
The energy (\ref{discrete_energy}) of a  deformation 
$u_\epsilon: \epsilon \mathbb{Z}^{2} \cap \Omega \rightarrow
\mathbb{R}^{2}$,  takes then the form:
\begin{eqnarray*}
E_\epsilon (u_\epsilon) = \sum_{i, j = 1}^{2}\sum_{\alpha \in
  R^{(-1)^{j}e_i}_\epsilon (\Omega)} \epsilon^{2}\Big \vert
\dfrac{\vert u_\epsilon(\alpha + (-1)^{j}\epsilon e_i) - u_\epsilon
  (\alpha) \vert}{\epsilon \vert A(\alpha)e_i \vert} - 1 \Big \vert
^{2}. 
\end{eqnarray*} 
Let $U_\epsilon\subset \Omega$ be the union of those (open) cells in the lattice $\epsilon \mathbb{Z}^2$,
 which have non-empty intersection with the set $\Omega_{\sqrt{2} \epsilon}$.
We consider the standard triangulation $\mathcal{T}_{\epsilon, 2}$ of the lattice $\epsilon \mathbb{Z}^{2}$, 
as in (\ref{form}), and we  identify the discrete map $u_\epsilon$ with the unique continuous function on $U_\epsilon$, 
affine on all the triangles in $\mathcal{T}_{\epsilon, 2}\cap U_\epsilon$, and matching with $u_\epsilon$
at each node.

Define the function $W: \mathbb{R}^{2 \times 2} \rightarrow \mathbb{R}$:
\begin{eqnarray*}
W([M_{ij}]_{i,j=1..2}) = \sum_{j=1}^2 \left(\Big(\sum_{i=1}^2
  |M_{ij}|^2\Big)^{1/2} - 1\right)^2 \qquad\forall
M\in\mathbb{R}^{2\times 2}.
\end{eqnarray*} 
We easily see that for every $\alpha\in\epsilon\mathbb{Z}^2\cap U_\epsilon$:
\begin{equation*}
\begin{split}
&\epsilon^{2}\left( \Big|\frac{|u_\epsilon(\alpha + \epsilon e_1) -
    u_\epsilon(\alpha)|}{\epsilon| A(\alpha) e_1|} -1\Big|^2 +
  \Big|\frac{|u_\epsilon(\alpha + \epsilon (e_1 + e_2)) -
    u_\epsilon(\alpha + \epsilon e_1)|}{\epsilon| A(\alpha + \epsilon
    e_1) e_2 |} - 1 \Big|^2\right) \\ & \qquad\qquad 
=  2\int_{T_\alpha^{(1,2)}}W(\nabla u_\epsilon(x)\lambda_\epsilon(x))~\mbox{d}x,
\end{split}
\end{equation*}
where $\lambda_\epsilon:U_\epsilon\rightarrow\mathbb{R}^{2\times 2}$ is a piecewise constant matrix field, given by:
\begin{equation*}
\begin{split}
\forall x\in T_\alpha^{(1,2)}\cap U_\epsilon,\qquad & \lambda_\epsilon(x) = \mbox{diag}\left\{
\vert A(\alpha)e_1 \vert^{-1}, \vert A(\alpha+\epsilon e_1)e_2 \vert^{-1} \right\} \\
\forall x\in T_\alpha^{(2,1)}\cap U_\epsilon,\qquad & \lambda_\epsilon(x) = \mbox{diag}\left\{
\vert A(\alpha+\epsilon e_2)e_1 \vert^{-1} , \vert A(\alpha)e_2 \vert^{-1}\right\}
\end{split}
\end{equation*}
while we recall that $T_\alpha^{(1,2)}= \mbox{conv}\{\alpha, \alpha+\epsilon e_1, \alpha+\epsilon(e_1+e_2)\}$ and 
$T_\alpha^{(2,1)}= \mbox{conv}\{\alpha, \alpha+\epsilon e_2, \alpha+\epsilon(e_1+e_2)\}$.
Similarly, we get:
\begin{equation*}
\begin{split}
&\epsilon^{2}\left( \Big|\frac{|u_\epsilon(\alpha + \epsilon e_2) -
    u_\epsilon(\alpha)|}{\epsilon| A(\alpha) e_2|} -1\Big|^2 +
  \Big|\frac{|u_\epsilon(\alpha + \epsilon (e_1 + e_2)) -
    u_\epsilon(\alpha + \epsilon e_2)|}{\epsilon| A(\alpha+\epsilon
    e_2) e_1|}  
- 1 \Big|^2\right) \\ & \qquad\qquad 
=  2\int_{T_\alpha^{(2,1)}}W(\nabla u_\epsilon(x)\lambda_\epsilon(x))~\mbox{d}x.
\end{split}
\end{equation*}

For the interactions in the opposite directions: $-e_1$ and $-e_2$, we obtain:
\begin{equation*}
\begin{split}
&\epsilon^{2}\left( \Big|\frac{|u_\epsilon(\alpha + \epsilon e_1) -
    u_\epsilon(\alpha+\epsilon(e_1+ e_2))|}{\epsilon|
    A(\alpha+\epsilon(e_1+ e_2)) e_2|} -1\Big|^2 +
  \Big|\frac{|u_\epsilon(\alpha) - u_\epsilon(\alpha + \epsilon
    e_1)|}{\epsilon| A(\alpha + \epsilon e_1) e_1 |} - 1
  \Big|^2\right) \\ & \qquad\qquad 
=  2\int_{T_\alpha^{(1,2)}}W(\nabla u_\epsilon(x)\bar\lambda_\epsilon(x))~\mbox{d}x,
\end{split}
\end{equation*}
and:
\begin{equation*}
\begin{split}
&\epsilon^{2}\left( \Big|\frac{|u_\epsilon(\alpha + \epsilon e_2) -
    u_\epsilon(\alpha+\epsilon(e_1+e_2))|}{\epsilon|
    A(\alpha+\epsilon(e_1+e_2)) e_1|} -1\Big|^2 +
  \Big|\frac{|u_\epsilon(\alpha)- u_\epsilon(\alpha + \epsilon
    e_2)|}{\epsilon| A(\alpha + \epsilon e_2) e_2 |} - 1
  \Big|^2\right) \\ & \qquad\qquad 
=  2\int_{T_\alpha^{(2,1)}}W(\nabla u_\epsilon(x)\bar\lambda_\epsilon(x))~\mbox{d}x,
\end{split}
\end{equation*}
where $\bar\lambda_\epsilon:U_\epsilon\rightarrow\mathbb{R}^{2\times 2}$ is given by:
\begin{equation*}
\begin{split}
\forall x\in T_\alpha^{(1,2)}\cap U_\epsilon,\qquad & \bar\lambda_\epsilon(x) = \mbox{diag}\left\{
\vert A(\alpha+\epsilon e_1)e_1 \vert^{-1},
\vert A(\alpha+\epsilon (e_1+ e_2))e_2 \vert^{-1}\right\}\\
\forall x\in T_\alpha^{(2,1)}\cap U_\epsilon,\qquad & \bar\lambda_\epsilon(x) = \mbox{diag}\left\{
\vert A(\alpha+\epsilon (e_1 +e_2))e_1 \vert^{-1}, \vert A(\alpha+\epsilon e_2)e_2 \vert^{-1}\right\}
\end{split}
\end{equation*}

Summing over all $2$-simplices and noting that each interaction was counted twice, we obtain:
\begin{equation}\label{estim_near_2}
\begin{split}
0& \leq E_\epsilon(u_\epsilon) - I_{\epsilon,1}(u_\epsilon) 
 \leq \sum_{i,j=1}^2 \sum_{ \alpha \in R^{(-1)^je_i}_\epsilon (\overline{\Omega \setminus U_\epsilon})} 
\epsilon^{2} \Big \vert \dfrac{ \vert u_\epsilon(\alpha + \epsilon (-1)^j\epsilon e_i) 
- u_\epsilon (\alpha) \vert}{\epsilon \vert A(\alpha)e_i \vert} - 1 \Big \vert ^{2},
\end{split}
\end{equation}
where:
\begin{equation}\label{I1d}
I_{\epsilon,1}(u_\epsilon)  = \int_{U_\epsilon}\Big( W(\nabla u_\epsilon(x) \lambda_\epsilon(x)) 
+ W(\nabla u_\epsilon(x) \overline{\lambda}_\epsilon(x)) \Big)~ \mbox{d}x. 
\end{equation}

\subsection{Case 2: nearest-neighbour interactions in $\mathbb{R}^{n}$}\label{rep2}

Let now  $\Omega\subset\mathbb{R}^{n}$,
and assume that $\psi(1)=1$ and $\psi(|\xi|)=0$ for $|\xi|\geq \sqrt{n}$.
For small $\epsilon > 0$, define $U_\epsilon\subset \Omega$ as the union of all  cells in $\epsilon\mathbb{Z}^n$, 
with the standard triangulation $\mathcal{T}_{\epsilon, n}$, that have
nonempty intersection with $\Omega_{\epsilon \sqrt{n}}$.
As in Case 1, we identify the given discrete deformation 
$u_\epsilon:\epsilon\mathbb{Z}^n\cap\Omega\rightarrow \mathbb{R}^n$ with its unique extension to 
the continuous function on $U_\epsilon$, affine on all of the
$n$-dimensional simplices in $\mathcal{T}_{\epsilon, n}\cap U_\epsilon$.

We also have  $W: \mathbb{R}^{n \times n} \rightarrow \mathbb{R}$:
\begin{equation}\label{W}
 W([M_{ij}]_{i,j:1..n})= \sum_{i = 1}^{n}
\left((\sum_{i=1}^n|M_{ij}|^2)^{1/2} - 1\right)^{2} \qquad \forall
M\in\mathbb{R}^{n\times n}.
\end{equation}
Note that for any permutation $\pi\in S_n$ one has:
\begin{equation*}
\begin{split}
&\epsilon^{n}\sum_{j=0}^{n-1}\Big|\frac{|u_\epsilon(\alpha + \epsilon \sum_{i=1}^{j+1} e_{\pi(i)}) 
- u_\epsilon(\alpha + \epsilon \sum_{i=1}^{j} e_{\pi(i)})|}{\epsilon| A(\alpha + \epsilon \sum_{i=1}^{j} e_{\pi(i)}) e_{\pi(j+1)}|}
 -1\Big|^2    \\ & \qquad\qquad
=  n!\int_{T_\alpha^{\pi}}W(\nabla u_{\epsilon}(x)\lambda_{\epsilon}(x))~\mbox{d}x,
\end{split}
\end{equation*}
where the piecewise constant matrix field $\lambda_{\epsilon}$ is given by:
\begin{equation}\label{for1}
\forall x\in T_\alpha^{\pi}\cap U_{\epsilon},\qquad \lambda_{\epsilon}(x) = \mbox{diag} \left\{
\vert A(\alpha + \epsilon \sum_{i=1}^{\pi^{-1}(j)-1} e_{\pi(i)}) e_j \vert^{-1}\right\}_{j=1}^n. 
\end{equation}

To include the interactions in $\{-e_i\}$ directions, as before, we write:
\begin{equation*}
\begin{split}
&\epsilon^{n}\sum_{j=0}^{n-1}\Big|\frac{|u_\epsilon(\alpha + \epsilon
  \sum_{i=1}^{j} e_{\pi(i)}) - u_\epsilon(\alpha + \epsilon
  \sum_{i=1}^{j+1} e_{\pi(i)})|}{\epsilon| A(\alpha + \epsilon
  \sum_{i=1}^{j+1} e_{\pi(i)}) e_{\pi(j+1)}|} -1\Big|^2    \\ &
\qquad\qquad 
=  n!\int_{T_\alpha^{\pi}}W(\nabla u_{\epsilon}(x)\bar\lambda_{\epsilon}(x))~\mbox{d}x,
\end{split}
\end{equation*}
where: 
\begin{equation}\label{for2}
\forall x\in T_\alpha^{\pi}\cap U_{\epsilon},\qquad \bar\lambda_{\epsilon}(x) = \mbox{diag} \left\{
\vert A(\alpha + \epsilon \sum_{i=1}^{\pi^{-1}(j)} e_{\pi(i)}) e_j \vert^{-1}\right\}_{j=1}^n. 
\end{equation}

Summing over all of the $n$-simplices, and noting that each one-length
interaction is counted $n!$ times, we obtain: 
\begin{equation}\label{estim2}
\begin{split}
0 & \leq E_\epsilon(u_\epsilon) - I_{\epsilon, 1}(u_\epsilon) \leq\sum_{\vert \xi \vert
  = 1}  
\sum_{ \alpha \in R^{\xi}_\epsilon (\overline{\Omega \setminus
    U_\epsilon})} \epsilon^{n} \Big \vert \dfrac{ \vert
  u_\epsilon(\alpha + \epsilon \xi) - u_\epsilon (\alpha)
  \vert}{\epsilon \vert A(\alpha)\xi \vert} - 1 \Big \vert ^{2},
\end{split}
\end{equation}
where $I_{\epsilon, 1}$ is given by the same formula as in
(\ref{I1d}), with $\lambda_\epsilon$ and $\bar\lambda_\epsilon$
defined as in (\ref{for1}), (\ref{for2}).

\subsection{Case 3: next-to-nearest-neighbour interactions in $\mathbb{R}^2$}\label{sec3}

Let us assume now again that $\Omega\subset\mathbb{R}^2$, and that 
$\psi(\sqrt{2})=1$ and $\psi(|\xi|)=0$ for $|\xi|\geq \sqrt{3}$ and
$|\xi|\leq 1$.
Our goal now is to obtain a similar representation and  bound to
(\ref{estim_near_2}) (\ref{I1d}) for the discrete energy 
corresponding to the next-to-nearest-neighbour interactions of length $\sqrt{2}$.
The canonical lattice $\epsilon\mathbb{Z}^{2}$ is now mapped onto the lattice $\epsilon B\mathbb{Z}^2$, where:
$$ B = \left[\begin{array}{cc}
1 & -1 \\  1 & 1
\end{array}\right]. $$
We will also need to work with the translated lattice  $\epsilon(e_1 + B\mathbb{Z}^2)$.
Let $U^0_{\epsilon, \sqrt{2}}\subset\Omega$ be the union of all open
cells in the lattice $\epsilon B\mathbb{Z}^2$ which have nonempty
intersection with $\Omega_{2\epsilon}$. Define
$u^0_{\epsilon,\sqrt{2}}$ to be the unique continuous function on
$U^0_{\epsilon,\sqrt{2}}$, affine on the triangles of the induced
triangulation $B\mathcal{T}_{\epsilon, 2}\cap U_{\epsilon,\sqrt{2}}^0$, matching with the original
deformation $u_\epsilon$ at each node of the lattice $\epsilon B
\mathbb{Z}^2\cap U^0_{\epsilon, \sqrt{2}}$. 
Likewise, by $U^1_{\epsilon, \sqrt{2}}\subset\Omega$ we call the union
of cells in the lattice $\epsilon (e_1 + B\mathbb{Z}^2)$ which have
nonempty intersection with $\Omega_{2\epsilon}$, while
$u^1_{\epsilon,\sqrt{2}}$ is the matching continuous piecewise affine (on
triangles in $\epsilon e_1 + B\mathcal{T}_{\epsilon, 2}$) extension of
$u_\epsilon$. 

Denoting $\xi_1 = Be_1$ and $\xi_2=Be_2$ we obtain, as before:
\begin{equation*}
\begin{split}
&\epsilon^{2}\Big( \Big|\frac{|u_\epsilon(B(\alpha + \epsilon e_1)) 
- u_\epsilon(B\alpha)|}{\epsilon| A(B\alpha) \xi_1|} -1\Big|^2  
+   \Big|\frac{|u_\epsilon(B(\alpha +
  \epsilon(e_1+e_2))) - u_\epsilon(B(\alpha + \epsilon
  e_1))|}{\epsilon| A(B(\alpha + \epsilon e_1)) \xi_2 |} - 1
\Big|^2\Big) \\ & \qquad\qquad\qquad\qquad\qquad 
=  \frac{2}{|\det B|}\int_{BT_\alpha^{(1,2)}}W(\nabla
u^0_{\epsilon,\sqrt{2}}(x)\lambda^0_{\epsilon,\sqrt{2}}(x))~\mbox{d}x, 
\end{split}
\end{equation*}
where $\lambda^0_{\epsilon,\sqrt{2}}:U^0_{\epsilon,\sqrt{2}}\rightarrow\mathbb{R}^{2\times 2}$ is given by:
\begin{equation*}
\begin{split}
\forall x\in BT_\alpha^{(1,2)}\cap U^1_{\epsilon,\sqrt{2}}\qquad & \lambda^0_{\epsilon,\sqrt{2}}(x) = \sqrt{2} B 
\mbox{diag}\left\{
\vert A(B\alpha)\xi_1 \vert^{-1}, \vert A(B(\alpha+\epsilon e_1))\xi_2 \vert^{-1}  \right\}\\
\forall x\in BT_\alpha^{(2,1)}\cap U^1_{\epsilon,\sqrt{2}}\qquad & \lambda^0_{\epsilon,\sqrt{2}}(x) = \sqrt{2} B
\mbox{diag} \left\{\vert A(B(\alpha+\epsilon e_2))\xi_1 \vert^{-1}, \vert A(B\alpha)\xi_2 \vert^{-1}\right\}.
\end{split}
\end{equation*}
Interactions in the opposite directions $-\xi_i$, yield the integrals:
$$ \frac{2}{\vert \det B\vert}\int_{BT_\alpha^{1,2}}W(\nabla u^0_{\epsilon,\sqrt{2}}(x) 
\bar\lambda^0_{\epsilon,\sqrt{2}}(x)) ~ \mbox{d}x, $$
where now $\bar\lambda^0_{\epsilon,\sqrt{2}}:U^1_{\epsilon,\sqrt{2}}\rightarrow\mathbb{R}^{2\times 2}$ satisfies:
\begin{equation*}
\begin{split}
\forall x&\in B T_\alpha^{(1,2)} \cap U^1_{\epsilon,\sqrt{2}}\qquad \\
& \bar\lambda^0_{\epsilon,\sqrt{2}}(x) = \sqrt{2}B \mbox{ diag}\left\{
\vert A(B(\alpha+\epsilon e_1))\xi_1 \vert^{-1}, \vert A(B(\alpha+\epsilon (e_1+e_2)))\xi_2 \vert^{-1} \right\},
\\
\forall x&\in B T_\alpha^{(2,1)}\cap U^1_{\epsilon,\sqrt{2}}\qquad \\
& \bar\lambda^0_{\epsilon,\sqrt{2}}(x) = \sqrt{2}B \mbox{ diag}\left\{
\vert A(B(\alpha+\epsilon (e_1+e_2)))\xi_1 \vert^{-1}, \vert A(B(\alpha+\epsilon e_2))\xi_2 \vert^{-1} \right\}.
\end{split}
\end{equation*}
Similarly, we obtain the integral representations on the triangulation 
$\epsilon e_1 + B\mathcal{T}_{\epsilon, 2}$ of the set
$U^1_{\epsilon,\sqrt{2}}$:
$$ \int W(\nabla u^1_{\epsilon,\sqrt{2}}(x)
\lambda^1_{\epsilon,\sqrt{2}}(x)) ~ \mbox{d}x \qquad \mbox{and} \qquad
\int W(\nabla u^1_{\epsilon,\sqrt{2}}(x) \lambda^1_{\epsilon,\sqrt{2}}(x)) ~ \mbox{d}x,$$ 
with the piecewise affine functions:
\begin{equation*}
\begin{split}
& \forall x\in (\epsilon e_1 + BT_\alpha^{(1,2)})\cap U^1_{\epsilon,\sqrt{2}} \qquad \\
& \qquad \lambda^1_{\epsilon,\sqrt{2}}(x) = \sqrt{2}B 
\mbox{ diag}\left\{
\vert A(\epsilon e_1 + B\alpha)\xi_1 \vert^{-1}, \vert A(\epsilon e_1 + B(\alpha+\epsilon e_1))\xi_2 \vert^{-1} 
\right\} \\
& \forall x\in (\epsilon e_1 + BT_\alpha^{(2,1)})\cap U^1_{\epsilon,\sqrt{2}} \qquad \\
& \qquad \lambda^1_{\epsilon,\sqrt{2}}(x) =\sqrt{2}B
\mbox{ diag}\left\{
\vert A(\epsilon e_1 + B(\alpha+\epsilon e_2))\xi_1 \vert^{-1}, \vert A(\epsilon e_1 + B\alpha)\xi_2 \vert^{-1}
\right\} \\
& \forall x\in (\epsilon e_1 + BT_\alpha^{(1,2)})\cap
U^1_{\epsilon,\sqrt{2}} \quad \\ &
\qquad \bar\lambda^1_{\epsilon,\sqrt{2}}(x)  =
\sqrt{2}B \mbox{ diag}\left\{
\vert A(\epsilon e_1 + B(\alpha+\epsilon e_1))\xi_1 \vert^{-1}, 
\vert A(\epsilon e_1 + B(\alpha+\epsilon (e_1+e_2)))\xi_2 \vert^{-1}
\right\} \\
& \forall x\in (\epsilon e_1 + BT_\alpha^{(2,1)})\cap U^2_{\epsilon,\sqrt{2}} \quad \\
& \qquad \bar\lambda^1_{\epsilon,\sqrt{2}}(x) = \sqrt{2}B \mbox{ diag}\left\{ 
\vert A(\epsilon e_1 + B(\alpha+\epsilon (e_1+e_2)))\xi_1 \vert^{-1}, 
\vert A(\epsilon e_1 + B(\alpha+\epsilon e_2))\xi_2 \vert^{-1}
\right\}
\end{split}
\end{equation*}
Consequently:
\begin{equation}\label{estim_next_near_2}
\begin{split}
0 & \leq E_\epsilon(u_\epsilon)-I_{\epsilon,\sqrt{2}}(u_\epsilon) \\ & \leq 
\sum_{i,j=1}^2 \sum_{ \alpha \in R_\epsilon^{(-1)^j\xi_i} (\overline{\Omega \setminus \Omega_{2\epsilon}})} 
\epsilon^{2} \Big \vert \frac{ \vert u_\epsilon(\alpha + \epsilon (-1)^j\xi_i) 
- u_\epsilon (\alpha) \vert}{\epsilon \vert A(\alpha)\xi_i \vert} - 1 \Big \vert ^{2},
\end{split}
\end{equation}
where:
\begin{equation*}
\begin{split}
I_{\epsilon,\sqrt{2}}(u_\epsilon)  = ~ & \frac{1}{2}\int_{U^0_{\epsilon, \sqrt{2}}}
\Big(W(\nabla u^0_{\epsilon, \sqrt{2}}(x)\lambda^0_{\epsilon, \sqrt{2}}(x))  
+ W(\nabla u^1_{\epsilon,\sqrt{2}}(x)\bar \lambda_{\epsilon, \sqrt{2}}^1(x))\Big)~\mbox{d}x \\ & +
\frac{1}{2}\int_{U^1_{\epsilon, \sqrt{2}}}\Big(W(\nabla u^1_{\epsilon, \sqrt{2}}(x)\lambda^1_{\epsilon, \sqrt{2}}(x))  
+ W(\nabla u^1_{\epsilon,\sqrt{2}}(x)\bar \lambda_{\epsilon, \sqrt{2}}^1(x))\Big)~\mbox{d}x.
\end{split}
\end{equation*} 

\section{Integral representation of discrete energies
  (\ref{discrete_energy}) - the general case}\label{rep3}

\begin{lemma}\label{algo}
Let $\xi = (\xi^1,\ldots, \xi^n)\in \mathbb{Z}^n\setminus \{0\}$. Let
$k$ denote the number of non-zero coordinates in $\xi$, and denote:
$\xi^{i_1},\ldots, \xi^{i_k} $ $\neq 0$ with $i_1<i_2\ldots<i_k$,
while $\xi^{j_1} =\ldots = \xi^{j_{n-k}} = 0$ with
$j_1<j_2\ldots<j_{n-k}$.
Fix $\bar s\in\{1\ldots k\}$ and define $n$ vectors $\xi_1,\ldots,
\xi_n\in\mathbb{Z}^n$ by the following algorithm:
\begin{equation*}
\begin{split}
& \xi_1 = \xi\\
& \forall p=2,\ldots, k-\bar s+1 \qquad \xi_p^{i_{\bar s - 1 +p}} = -
\xi^{i_{\bar s - 1 +p}},  ~~\mbox{ and } ~~\xi_p^i = \xi^i ~\mbox{ for all
  other indices } i \\
& \forall p=k-\bar s+2,\ldots, k\qquad \xi_p^{i_{\bar s - 1 +p-k}} = -
\xi^{i_{\bar s - 1 +p-k}},  ~~\mbox{ and } ~~\xi_p^i = \xi^i ~\mbox{ for all
  other indices } i \\
& \forall p=k+1,\ldots, n\qquad \xi_p^{i_{\bar s}} = 0, ~~ 
\xi_p^{j_{p-k}} = \xi^{i_{\bar s}},  ~~\mbox{ and } ~~\xi_p^i = \xi^i ~\mbox{ for all
  other indices } i.
\end{split}
\end{equation*}
(In other words, given $\xi$ and fixing one of its nonzero coordinates
$i_{\bar s}$, we first change sign of all its nonzero coordinates but
$\xi^{i_{\bar s}}$, in the cyclic order, starting from  $\xi^{i_{\bar
    s}}$: this gives $k$ vectors $\xi_p$. Then we permute the
$\xi^{i_{\bar s}}$ coordinate with all the zero coordinates: this
gives the remaining $n-k$ coordinates).

Then the $n$-tuple of vectors $\xi_1,\ldots, \xi_n$ is linearly independent.
\end{lemma}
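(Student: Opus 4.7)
The plan is to argue linear independence directly by identifying the hyperplane spanned by the difference vectors $\zeta_p := \xi_p - \xi_1$ (for $p = 2, \ldots, n$) and verifying that $\xi_1$ does not lie in it. Since $\{\xi_1, \xi_2, \ldots, \xi_n\}$ is linearly independent if and only if $\{\xi_1, \zeta_2, \ldots, \zeta_n\}$ is, this reduces the problem to a transparent rank computation.

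Unwinding the algorithm, I first read off the $\zeta_p$. For $2 \leq p \leq k$, the definition of $\xi_p$ only flips the sign of a single nonzero coordinate $i_{m(p)}$, where $m(p)$ cycles through $\{1,\ldots, k\}\setminus\{\bar s\}$ as $p$ varies (a direct check shows that the two subranges $p \in \{2,\ldots, k - \bar s + 1\}$ and $p \in \{k - \bar s + 2, \ldots, k\}$ together cover exactly these $k - 1$ indices). Consequently
$$ \zeta_p = -2\xi^{i_{m(p)}} e_{i_{m(p)}} \qquad (2 \leq p \leq k), $$
which is a nonzero multiple of a distinct standard basis vector. For $k+1 \leq p \leq n$, the algorithm moves the value $\xi^{i_{\bar s}}$ from position $i_{\bar s}$ to position $j_{p-k}$, so
$$ \zeta_p = \xi^{i_{\bar s}}\bigl(e_{j_{p-k}} - e_{i_{\bar s}}\bigr) \qquad (k+1 \leq p \leq n). $$
The first group lives in $\mbox{span}\{e_{i_m} : m\neq \bar s\}$, while the second group has nonzero components along the disjoint coordinate axes $e_{j_1},\ldots, e_{j_{n-k}}$. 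Hence these $(k-1)+(n-k) = n-1$ vectors are linearly independent and span the hyperplane
$$ V = \Bigl\{v \in \mathbb{R}^n :\, v^{i_{\bar s}} + \sum_{\ell=1}^{n-k} v^{j_\ell} = 0\Bigr\}. $$

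It remains to verify $\xi_1 = \xi \notin V$. Substituting $\xi$ into the defining equation of $V$ and using that $\xi^{j_\ell} = 0$ for all $\ell$, the condition reduces to $\xi^{i_{\bar s}} = 0$, which fails by hypothesis (since $i_{\bar s}$ is by definition an index where $\xi$ is nonzero). The only real obstacle is the indexing bookkeeping: one must carefully check that the two-case cyclic formula in the algorithm visits each index in $\{1,\ldots,k\}\setminus\{\bar s\}$ precisely once, and that the $n-k$ vectors $e_{j_{p-k}} - e_{i_{\bar s}}$ are produced without repetition. Once these are confirmed, the argument is a direct support/rank calculation and requires no further machinery.
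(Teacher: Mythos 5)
Your proof is correct, and the bookkeeping you flag does check out: for $p\in\{2,\ldots,k-\bar s+1\}$ the flipped index is $i_{\bar s-1+p}$ with $\bar s-1+p$ ranging over $\{\bar s+1,\ldots,k\}$, and for $p\in\{k-\bar s+2,\ldots,k\}$ it ranges over $\{1,\ldots,\bar s-1\}$, so together they cover $\{1,\ldots,k\}\setminus\{\bar s\}$ exactly once; likewise the indices $j_{p-k}$ for $p=k+1,\ldots,n$ are distinct. Your route is organized differently from the paper's. The paper first normalizes (WLOG $i_p=p$ and $\bar s=1$), treats the case $k=n$ by row-scaling the matrix $[\xi_1,\ldots,\xi_n]$ to an explicit matrix $\tilde B$ of $\pm 1$ entries with nonzero determinant, and then handles $k<n$ by a block-triangular decomposition whose lower-right $(n-k)\times(n-k)$ block is $\xi^{i_{\bar s}}\mathrm{Id}_{n-k}$. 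You instead perform the column operations $\xi_p\mapsto \xi_p-\xi_1$, recognize the resulting $n-1$ vectors as the scaled basis vectors $-2\xi^{i_{m}}e_{i_{m}}$ ($m\neq\bar s$) together with $\xi^{i_{\bar s}}(e_{j_\ell}-e_{i_{\bar s}})$, and observe that these span the hyperplane $v^{i_{\bar s}}+\sum_\ell v^{j_\ell}=0$, which does not contain $\xi$ because $\xi^{i_{\bar s}}\neq 0$ and $\xi^{j_\ell}=0$. The two arguments amount to the same linear algebra (the nonvanishing of $\det\tilde B$ in the paper is itself most transparently seen by the same column subtraction), but yours has the advantage of requiring neither the normalization nor the case split $k=n$ versus $k<n$: it is a single uniform computation. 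The only point you state without verifying is that each of the $n-1$ difference vectors actually lies in the hyperplane $V$; this is an immediate check ($e_{i_m}$ has zero $i_{\bar s}$- and $j_\ell$-components, and $e_{j_\ell}-e_{i_{\bar s}}$ contributes $1-1=0$ to the defining sum), so it is not a gap.
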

\begin{proof}
Without loss of generality, we may assume that $i_p=p$ for all $
p=1,\ldots, k$ and $\bar s =1$.

Consider first the case when $k=n$, i.e. when all coordinates of the
vector $\xi$ are nonzero.
Then the matrix $B=[\xi_1,\ldots, \xi_n]$ is similar to the following
matrix:
$$\tilde B = \left[\begin{array}{ccccc} 1 & 1 & 1 & \cdots & 1\\
1 & -1 & 1 & \cdots & 1\\ 1 & 1 & -1 & \cdots & 1\\
\cdots & \cdots & \cdots & \cdots & \cdots \\
 1 & 1 & 1 & \cdots & -1
 \end{array}\right],$$
by the basic operations of dividing each row by $|\xi^i|$. The matrix $\tilde B$
above has nonzero determinant, which proves the claim.

Assume now that $k\neq n$, i.e. the last $n-k>0$ coordinates of $\xi$
are zero. Then, the $k\times k$ principal minor of the matrix
$B=[\xi_1,\ldots, \xi_n]$ is invertible, as in the first case
above. The minor consisting of $n-k$ last rows and $k$ first columns
of $B$ equals zero, hence $B$ is invertibe if and only if its minor $B_0$
consisting of $n-k$ last rows and $n-k$ last columns is
invertible. But $B_0 = \xi^{i_{\bar s}}\mbox{Id}_{n-k}$ and hence the
lemma is achieved.
\end{proof}

\subsection{Case 4:  interactions of  a given length $|\xi_0| \neq 0$ in $\mathbb{R}^n$}\label{sec44}
Assume now that $\Omega\subset\mathbb{R}^n$ and let 
$\psi(|\xi_0|)=1$ and $\psi(|\xi|)=0$ for $||\xi| - |\xi_0||>s$, and a
small $s>0$.
Consider the following set of unordered $n$-tuples, which we assume to be nonempty:
\begin{equation} 
\label{sxi}
S_{\vert \xi_0 \vert} = \Big\{ \zeta=\{\zeta^1, ..., \zeta^n\} \subset \mathbb{Z}, 
 ~~  |\zeta|^{2} = \vert \xi_0 \vert^2\Big\}. 
\end{equation}
Fix $\zeta\in S_{|\xi_0|}$ and let $N_\zeta$ be the set of all
distinct signed permutations without repetitions of the coordinates of
$\zeta$, i.e.:
\begin{equation}\label{try}
N_\zeta = \left\{(\pm \zeta^{\pi(1)}, \pm \zeta^{\pi(2)}, \ldots, \pm
  \zeta^{\pi(n)}); ~ \pi\in S_n\right\}.
\end{equation}
Clearly: $|N_\zeta| = 2^k \frac{n!}{k_1!\ldots k_n!}$, where
$k_1,\ldots, k_n$ denote the numbers of repetitions of distinct
coordinates in $\zeta$, and $k$ is the number of non-zero coordinates
in $\zeta$.

For each $\xi\in N_\zeta$ and each of its $k$ non-zero entries
$\xi^{i_{\bar s}}$ we define the set of linearly independent vectors
$\xi_1,\ldots \xi_n$ using the algorithm described in Lemma
\ref{algo}. We call $K_\zeta$ the set of all matrices $B=[\xi_1,\ldots \xi_n]$ obtained by this
procedure; it corresponds to the set of lattices $\epsilon B
\mathbb{Z}^n$ whose edges have lengths $\epsilon|\xi_0|$. Note that:
$$|K_\zeta| = k|N_\zeta| = 2^k k \frac{n!}{k_1!\ldots k_n!}.$$

\begin{lemma}\label{multiplicity}
Let $\zeta\in S_{|\xi_0|}$ have $k$ non-zero entries. Then every
vector $\xi\in N_\zeta$ is included in exactly $nk$ lattices $B$, as
described above.
\end{lemma}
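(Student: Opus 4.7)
The plan is to fix $\eta\in N_\zeta$ and count the triples $(\xi,\bar s,p)$, with $\xi\in N_\zeta$, $\bar s\in\{1,\ldots,k\}$ and $p\in\{1,\ldots,n\}$, for which $\eta$ coincides with the $p$-th column $\xi_p$ of the matrix $B(\xi,\bar s)\in K_\zeta$ produced by the algorithm of Lemma \ref{algo}. Since for fixed $(\xi,\bar s)$ the column index $p$ is uniquely determined by the position in which $\eta$ sits, this count equals the number of matrices $B\in K_\zeta$ that contain $\eta$ as a column, which is precisely the quantity to be bounded. I note first that every $\xi_p$ is again a signed permutation of the multiset $\zeta$, so $\xi_p\in N_\zeta$ and it is legitimate to look for $\eta$ among the columns.

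I would then partition the count according to the three regimes of $p$ appearing in the algorithm. For $p=1$, the condition $\eta=\xi_1$ forces $\xi=\eta$ while $\bar s$ ranges freely over $\{1,\ldots,k\}$, contributing $k$ triples. For $p\in\{2,\ldots,k\}$, the vector $\xi_p$ differs from $\xi$ only in the sign of one nonzero coordinate $i_{\bar s-1+p}$ (taken cyclically and avoiding $i_{\bar s}$), so $\xi$ and $\eta$ must share the same support and differ by one sign at a nonzero position $i_\ell$; for each of the $k$ choices of $\ell$ the vector $\xi$ is determined, and the admissible values of $\bar s$ are precisely $\{1,\ldots,k\}\setminus\{\ell\}$, since for any such $\bar s$ there is exactly one $p\in\{2,\ldots,k\}$ matching the cyclic rule, yielding $k(k-1)$ triples. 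For $p\in\{k+1,\ldots,n\}$, the vector $\xi_p$ is obtained from $\xi$ by relocating one nonzero entry to an originally zero slot, so $\xi$ coincides with $\eta$ except at one nonzero position $j$ of $\eta$ (which becomes zero in $\xi$) and one zero position $i$ of $\eta$ (which carries the value $\eta^j$ in $\xi$); each of the $k(n-k)$ pairs $(j,i)\in P_\eta\times Z_\eta$ determines a unique triple, where $P_\eta$ and $Z_\eta$ are the sets of nonzero and zero positions of $\eta$.

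Summing the three contributions gives $k+k(k-1)+k(n-k)=kn$, the desired multiplicity. The step requiring most care is the cyclic bijection in the second regime: the exclusion $\bar s\neq\ell$ encodes the fact that the algorithm never flips the sign at the distinguished coordinate $i_{\bar s}$, and once that constraint is imposed, the correspondence between $\bar s\in\{1,\ldots,k\}\setminus\{\ell\}$ and $p\in\{2,\ldots,k\}$ is a straightforward bijection induced by the cyclic ordering of the nonzero coordinates. As a consistency check, a double-counting argument gives the same answer: the total number of column slots is $n|K_\zeta|=nk|N_\zeta|$, so if each element of $N_\zeta$ appears equally often as a column (which the case analysis above establishes), the common multiplicity is necessarily $nk$.
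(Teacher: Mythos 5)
Your argument is correct and takes essentially the same route as the paper's proof: you fix the target vector and count the lattices containing it as a column according to the column's position, arriving at the same split $k^2+(n-k)k=nk$ (your $k+k(k-1)$ for the first $k$ positions being the paper's $k^2$). Your version merely spells out the cyclic bijection and the parametrization by flipped/relocated coordinates in more detail than the paper does.
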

\begin{proof}
Firstly, the number of lattices where $\xi$ is one of the first $k$
columns of $B$, equals $k^2$ ($k$ possible columns and $k$ choices of
a non-zero entry $\xi^{i_{\bar s}}$). Secondly, the number of lattices
where $\xi$ is one of the last $n-k$ columns, equals $(n-k)k$ (given
by $n-k$
possible columns and $k$ choices of a non-zero entry which defines the
first vector in $B$). We hence obtain $nk$ total number of lattices,
as claimed.
\end{proof}

\begin{remark}
The total number of vectors (with repetitions) which are 
columns of lattices in the set $K_\zeta$, equals $|K_\zeta| n = nk
|N_\zeta|$. This is consistent with Lemma \ref{multiplicity}, as each
vector in $N_\zeta$ is repeated $nk$ times.
\end{remark}

\medskip

We now construct the integral representation of the discrete energy in
the presently studied Case 4.
Fix $B\in K_\zeta$ as above, and define $U^{0, B}_{\epsilon, |\xi_0|}\subset \Omega$ to be the union 
of all open cells in $\epsilon B \mathbb{Z}^n$ that have nonempty
intersection with $\Omega_{\epsilon \sqrt{n} |\xi_0|}$. 
We identify the discrete deformation $u_\epsilon$ with its unique continuous extension $u^{0, B}_{\epsilon, |\xi_0|}$ 
on $U^{0, B}_{\epsilon, |\xi_0|}$, affine on all the simplices of the induced triangulation $\epsilon B\mathcal{T}_{\epsilon, n}$.
Following the same observations as in the particular cases before, we
obtain, for any $\pi\in S(n)$:
\begin{equation*}
\begin{split}
&\epsilon^{n}\sum_{j=0}^{n-1}\Big|\frac{|u_\epsilon(B(\alpha + \epsilon \sum_{i=1}^{j+1} e_{\pi(i)})) 
- u_\epsilon(B(\alpha + \epsilon \sum_{i=1}^{j} e_{\pi(i)}))|}
{\epsilon| A(B(\alpha + \epsilon \sum_{i=1}^{j} e_{\pi(i)})) e_{\pi(j+1)}|} -1\Big|^2    
\\ & \qquad\qquad \qquad\qquad
=  \frac{n!}{|\det B|}\int_{BT_\alpha^{\pi}}W(\nabla u^{0, B}_{\epsilon, |\xi_0|}(x)\lambda^{0, B}_{\epsilon,|\xi_0|}(x))~\mbox{d}x,
\end{split}
\end{equation*}
where $W$ is as in (\ref{W}), and:
\begin{equation*}
\begin{split}
& \forall x\in BT_\alpha^{\pi}\cap U^{0, B}_{\epsilon, |\xi_0|} \\ & \qquad 
\lambda^{0, B}_{\epsilon, |\xi_0|}(x) = |\xi_0| B\mbox{ diag} \left\{
\vert A(B(\alpha + \epsilon \sum_{i=1}^{\pi^{-1}(j)-1} e_{\pi(i)})) Be_j \vert^{-1}\right\}_{j=1}^n. 
\end{split}
\end{equation*}
In order to take into account all of the interactions of length $\vert \xi_0 \vert$, we need to consider traslations of
the lattice $\epsilon B \mathbb{Z}^n $. Define:
\begin{equation}\label{VB}
V_{B} = \epsilon\mathbb{Z}^n \cap \left(\Big(\textrm{Int} (\epsilon B C_n) \cup  
\bigcup_{i=1}^n \epsilon B \{(x_1\ldots x_n)\in  C_n; ~~ x_i=1\}\Big)
\setminus \epsilon B V_n\right),
\end{equation}
where $V_n$ is the set of vertices of the unit cube $C_n$.
For every $\tau\in V_B$, define $U^{\tau, B}_{\epsilon, |\xi_0|}\subset \Omega$ to be the union of all cells 
in $\tau + \epsilon B \mathbb{Z}^n$ that have nonempty intersection with $\Omega_{\epsilon\sqrt{n} |\xi_0|}$.
We extend the discrete deformation $u_\epsilon$ to the continuous function $u^{\tau, B}_{\epsilon, |\xi_0|}$  
on $U^{\tau, B}_{\epsilon, |\xi_0|}$, affine on all the simplices of the induced triangulation 
$\tau + B\mathcal{T}_{\epsilon, n}$.
We then have:
\begin{equation*}
\begin{split}
&\epsilon^{n}\sum_{j=0}^{n-1}\Big|\frac{|u_\epsilon(\tau + B(\alpha +
  \epsilon \sum_{i=1}^{j+1} e_{\pi(i)})) - u_\epsilon(\tau + B(\alpha
  + \epsilon \sum_{i=1}^{j} e_{\pi(i)}))|}{\epsilon| A(\tau + B(\alpha
  + \epsilon \sum_{i=1}^{j} e_{\pi(i)})) e_{\pi(j+1)}|} -1\Big|^2
\\ & \qquad\qquad \qquad\qquad
=  \frac{n!}{|\det B|}\int_{\tau + BT_\alpha^{\pi}}W(\nabla u^{\tau,
  B}_{\epsilon, |\xi_0|}(x)\lambda^{\tau, B}_{\epsilon,|\xi_0|}(x))~\mbox{d}x, 
\end{split}
\end{equation*}
where:
\begin{equation*}
\begin{split}
& \forall x\in (\tau + BT_\alpha^{\pi})\cap U^{\tau, B}_{\epsilon, |\xi_0|}\qquad \\
&\qquad \lambda^{\tau,B}_{\epsilon, |\xi_0|}(x) = |\xi_0| B\mbox{ diag} \left\{
\vert A(\tau + B(\alpha + \epsilon \sum_{i=1}^{\pi^{-1}(j)-1} e_{\pi(i)})) Be_j \vert^{-1}\right\}_{j=1}^n. 
\end{split}
\end{equation*}
Summing now over all simplices in the triangulations, we obtain the functional:
\begin{equation}\label{form_case}
\begin{split}
 I_{\epsilon, |\xi_0|} (u_\epsilon) = \sum_{\zeta\in
  S_{|\xi_0|}}\frac{1}{n! (nk)}\sum_{B\in K_\zeta} \frac{n!}{|\det B|} 
\sum_{\tau\in\{0\}\cup V_B}
\int_{U^{\tau, B}_{\epsilon, |\xi_0|}} W(\nabla u^{\tau, B}_{\epsilon, |\xi_0|}(x) \lambda^{\tau, B}_{\epsilon, |\xi_0|}(x)) ~ \mbox{d}x,
\end{split}
\end{equation}
and the bound:
\begin{equation}\label{estim_4}
\begin{split}
0 & \leq E_\epsilon(u_\epsilon)-I_{\epsilon, |\xi_0|}(u_\epsilon) \\ & \leq 
\sum_{\xi\in\mathbb{Z}^n, |\xi|=|\xi_0|} ~~\sum_{ \alpha \in
  R_\epsilon^\xi (\overline{\Omega \setminus \Omega_{\epsilon\sqrt{n} |\xi_0|}}) }
\epsilon^{n} \Big \vert \frac{ \vert u_\epsilon(\alpha + \epsilon \xi) 
- u_\epsilon (\alpha) \vert}{\epsilon \vert A(\alpha)\xi \vert} - 1 \Big \vert ^{2}.
\end{split}
\end{equation}
In (\ref{form_case}), $k$ is the number of non-zero entries in the vector $\zeta$, while
the factor $n!$ in the first denominator is due to the fact
that every edge in a given lattice is shared by $n!$ simplices in
$\mathcal{T}_{\epsilon, n}$.

\subsection{Case 5:  the general case of finite range interactions in
  $\mathbb{R}^n$} 

Reasoning as in the previously considered specific cases, we get:
\begin{equation}\label{error}
0 \leq E_\epsilon(u_\epsilon) - I_{\epsilon}(u_\epsilon)
\leq\sum_{\xi\in\mathbb{Z}^n, 1\leq |\xi| \leq M} ~~
\sum_{ \alpha \in R_\epsilon^{\xi} (\overline{\Omega \setminus
    \Omega_{\epsilon\sqrt{n}M}})} \epsilon^{n} \psi(|\xi|) \Big \vert \dfrac{ \vert
  u_\epsilon(\alpha + \epsilon \xi) - u_\epsilon (\alpha)
  \vert}{\epsilon \vert A(\alpha)\xi \vert} - 1 \Big \vert ^{2},
\end{equation}
where:
\begin{equation}\label{Iepsilon}
\begin{split}
I_{\epsilon}  = & \sum_{1\leq |\xi_0| \leq M} \psi(|\xi_0|) I_{\epsilon, |\xi_0|}.
\end{split}
\end{equation}

\section{Bounds on the variational limits of the lattice energies}\label{sec4}

Consider the following family of energies:
$$ F_\epsilon:L^2(\Omega,\mathbb{R}^n)\to\overline{\mathbb{R}}, \qquad 
F_\epsilon (u) = \left\{\begin{array} {ll}
    E_\epsilon(u_{|\epsilon\mathbb{Z}^n\cap\Omega}) & \mbox{ if }
    u\in\mathcal{C}(\Omega) \mbox{ is affine on }
    \mathcal{T}_{\epsilon, n}\cap\Omega\\
+\infty & \mbox{ otherwise}.\end{array}\right.. $$
By Theorem \ref{p}, the sequence $F_\epsilon$ has a subsequence (which we do not relabel)
$\Gamma$-converging to some lsc functional
$\mathcal{F}:L^{2}(\Omega,\mathbb{R}^n)\to \overline{\mathbb{R}}$. 
Our goal is to identify the limiting energy $\mathcal{F}$ in its exact
form, whenever possible, or find its lower and upper bounds. This will be accomplished in Theorem
\ref{thmain}, and in the next section.

\medskip

We first state some easy preliminary results regarding the quasiconvexification $QW$ and the piecewise affine extensions
$u^{\tau, B}_{\epsilon, |\xi_0|}$ of the discrete deformations $u_\epsilon$.

\begin{lemma}\label{quasiconv. W}
The quasiconvexification $QW: \mathbb{R}^{n \times n} \rightarrow \mathbb{R}$ of $W$ in (\ref{W}),
is a convex function, and:
\begin{equation}\label{QW}
QW(M) = \sum_{i=1..n; |Me_i|>1} (|Me_i|-1)^2 \qquad \forall
M\in\mathbb{R}^{n\times n}.
\end{equation}
\end{lemma}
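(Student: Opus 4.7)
The plan is to show that $QW$ equals
$$g(M) \;:=\; \sum_{j=1}^n \bigl((|Me_j|-1)_+\bigr)^2 \;=\; \sum_{\{j\,:\,|Me_j|>1\}}(|Me_j|-1)^2,$$
by two separate inequalities: $g \leq QW$ will follow from the direct convexity of $g$, and $QW \leq g$ from an explicit rank-one lamination that rounds up each ``short'' column of $M$ to unit length.

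For $g \leq QW$, each map $M \mapsto |Me_j|$ is convex, being the composition of the Euclidean norm with a linear map, while the scalar function $t \mapsto (\max\{t-1,0\})^2$ is convex and non-decreasing on $\mathbb{R}$. Since the composition of a convex function with a convex non-decreasing one is convex, every summand of $g$ is convex, and hence so is $g$. Because $g \leq W$ pointwise and convexity implies quasiconvexity, $g$ is an admissible competitor in the definition of $QW$ as the largest quasiconvex minorant of $W$, which yields $g \leq QW$.

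For the reverse inequality, fix $M$ and put $J = \{j : |Me_j| < 1\}$. If $J = \varnothing$ then $QW(M) \leq W(M) = g(M)$ and we are done. Otherwise, pick $j_0 \in J$ and choose $a \in \mathbb{R}^n$ with $a \perp Me_{j_0}$ and $|a| = 2\sqrt{1 - |Me_{j_0}|^2}$, which is possible for $n \geq 2$ (the case $n=1$ reduces to a standard one-variable convex envelope computation). Setting $M^\pm := M \pm \tfrac{1}{2}\, a \otimes e_{j_0}$, one checks that $M = \tfrac12(M^+ + M^-)$, that $M^+ - M^- = a \otimes e_{j_0}$ is rank-one, that $|M^\pm e_{j_0}| = 1$, and that $M^\pm e_k = M e_k$ for $k \neq j_0$. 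Since $QW$ is rank-one convex (as a quasiconvex continuous function),
$$QW(M) \;\leq\; \tfrac12 QW(M^+) + \tfrac12 QW(M^-).$$
Iterating the construction at each remaining index of $J$ -- a subsequent perturbation $a' \otimes e_{j_1}$ with $j_1 \neq j_0$ leaves every column other than the $j_1$-st untouched, so columns already at unit length stay there -- produces after $|J|$ steps a family of $2^{|J|}$ matrices $\{\bar M_\alpha\}$ with $|\bar M_\alpha e_j| = 1$ for $j \in J$ and $|\bar M_\alpha e_k| = |Me_k|$ for $k \notin J$. Hence $W(\bar M_\alpha) = \sum_{k \notin J}(|Me_k|-1)^2 = g(M)$ for every $\alpha$, and averaging gives
$$QW(M) \;\leq\; 2^{-|J|}\sum_\alpha QW(\bar M_\alpha) \;\leq\; 2^{-|J|}\sum_\alpha W(\bar M_\alpha) \;=\; g(M).$$

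The main point requiring care is checking that successive laminations along disjoint column directions compose cleanly: because a perturbation of the form $a \otimes e_j$ modifies only the $j$-th column of the matrix, the column-rounding operations commute, and the final $2^{|J|}$ matrices share the common $W$-value $g(M)$. Once this bookkeeping is in place, combining $g \leq QW$ with $QW \leq g$ yields $QW = g$, which is the claim; as a byproduct, $QW$ coincides with the convex envelope $CW$ of $W$.
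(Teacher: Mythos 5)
Your proof is correct, but it follows a genuinely different route from the paper's. The paper gets the formula in two lines by citing structural results from Dacorogna: since $W(M)=\sum_j f(Me_j)$ with $f(\xi)=(|\xi|-1)^2$ depending on disjoint columns, the quasiconvexification distributes over the sum, $QW(M)=\sum_j Qf(Me_j)$; and since each summand is a function of an $n\times 1$ block, its quasiconvex and convex envelopes coincide, so one only has to compute $Cf$ for the radial scalar function $(|\xi|-1)^2$, which is the stated truncation. You instead prove both inequalities by hand: the lower bound $g\leq QW$ from the direct convexity of $g(M)=\sum_j((|Me_j|-1)_+)^2$ (which also delivers the convexity assertion of the lemma and the identity $QW=CW$), and the upper bound by an explicit rank-one lamination $M^\pm=M\pm\tfrac12\,a\otimes e_{j_0}$ that lifts each short column to unit length without disturbing the others; the orthogonality choice of $a$ and the observation that $a\otimes e_j$ only touches the $j$-th column make the iteration over $J$ legitimate, and rank-one convexity of the finite-valued quasiconvex function $QW$ closes the estimate. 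The trade-off is the usual one: the paper's argument is shorter but leans on the column-splitting theorem and the equivalence of quasiconvexity and convexity for $m\times 1$ arguments, whereas yours is self-contained and exhibits the optimal laminate explicitly, at the cost of the bookkeeping over the $2^{|J|}$ matrices. Both establish the same formula, and your closing remark that $QW=CW$ is consistent with the paper's observation that $QW$ is convex.
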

\begin{proof}
By  Theorem 6.12 and Theorem 5.3 in  \cite{Dac} (see Theorem
\ref{nm1}) we note that:
$$QW(M) = \sum_{i=1}^n Q\big( |Me_i|-1\big)^2.$$
and that the convexification:
and the quasiconvexification $Qf$ of the
function $f:\mathbb{R}^n\rightarrow \mathbb{R}$ given by $f(\xi) =
(|\xi|-1)^2$ coincide with each other.
The claim follows by checking directly that:
$$Cf(\xi) = \left\{\begin{array}{ll} 0 & \mbox{ if } |\xi|\leq 1\\
(|\xi|-1)^2 & \mbox{ if } |\xi|>1. \end{array}\right. $$
\end{proof}

\begin{lemma}\label{auxiliary_functions}
For every $u \in W^{1, 2}(\Omega, \mathbb{R}^{n})$, and every
mesh-size sequence $\epsilon\to 0$, there exists a
subsequence $\epsilon$ (which we do not relabel) and a sequence $u_\epsilon \in
W_0^{1,2}(\mathbb{R}^n, \mathbb{R}^n)$ of continuous 
piecewise affine on the triangulation in $\mathcal{T}_{\epsilon, n}$ functions, such that:
\begin{equation*}
\begin{split}
\forall 1\leq |\xi_0| \leq M \quad \forall \zeta\in S_{|\xi_0|}  \quad \forall
B\in K_\zeta\quad &  \forall \tau\in \{0\}\cup V_{B} \\ & u = \lim_{\epsilon\to 0}
u_{\epsilon, |\xi_0|}^{\tau, B} ~~~ \mbox{ in } W^{1,2}(\Omega,\mathbb{R}^n).
\end{split}
\end{equation*}
\end{lemma}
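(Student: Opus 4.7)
The plan is a diagonal argument combining smoothing of $u$ with classical finite-element interpolation estimates, applied simultaneously on the finite collection of triangulations indexed by $(|\xi_0|, \zeta, B, \tau)$. The key observation is that the indexing set is finite (since $|\xi_0|\leq M$, and for each such $\xi_0$ there are finitely many $\zeta\in S_{|\xi_0|}$, finitely many $B\in K_\zeta$, and finitely many $\tau\in\{0\}\cup V_B$), so a single discrete deformation on $\epsilon\mathbb{Z}^n$ can be required to interpolate well on all of them at once.

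First, I would extend $u\in W^{1,2}(\Omega,\mathbb{R}^n)$ to some $\bar u\in W^{1,2}(\mathbb{R}^n,\mathbb{R}^n)$ of compact support, via a Sobolev extension followed by a cut-off. Next, mollify to obtain $u^\delta\in C^\infty_c(\mathbb{R}^n,\mathbb{R}^n)$ with $u^\delta\to\bar u$ in $W^{1,2}(\mathbb{R}^n)$ as $\delta\to 0$, noting that $\|\nabla^2 u^\delta\|_{L^\infty}$ blows up only polynomially in $1/\delta$.

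Second, for each $\epsilon,\delta>0$ define $u^\delta_\epsilon\in W^{1,2}_0(\mathbb{R}^n,\mathbb{R}^n)$ to be the continuous piecewise affine interpolant of $u^\delta$ on the standard triangulation $\mathcal{T}_{\epsilon,n}$; since $u^\delta$ has compact support, so does $u^\delta_\epsilon$. For any admissible $(|\xi_0|,\zeta,B,\tau)$, the integer entries of $B$ together with $\tau\in\epsilon\mathbb{Z}^n$ (see (\ref{VB})) imply $\tau+\epsilon B\mathbb{Z}^n\subset\epsilon\mathbb{Z}^n$, so the nodes of the triangulation $\tau+B\mathcal{T}_{\epsilon,n}$ form a sublattice of $\epsilon\mathbb{Z}^n$ at which $u^\delta_\epsilon$ takes exactly the values of $u^\delta$. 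Consequently $(u^\delta_\epsilon)^{\tau,B}_{|\xi_0|}$ coincides with the piecewise affine interpolant of $u^\delta$ on $\tau+B\mathcal{T}_{\epsilon,n}$. Classical nodal-interpolation estimates in $W^{1,2}$ then give
$$\|(u^\delta_\epsilon)^{\tau,B}_{|\xi_0|} - u^\delta\|_{W^{1,2}(\Omega)} \leq C\,\epsilon\,\|\nabla^2 u^\delta\|_{L^\infty(\mathbb{R}^n)},$$
with $C$ uniform in $\epsilon$, $\delta$, and the choice of parameters, because the finiteness of the set of $B$ guarantees uniform non-degeneracy of all the simplices involved.

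Third, the triangle inequality combined with $\|u^\delta-u\|_{W^{1,2}(\Omega)}\to 0$ allows one to choose $\delta=\delta(\epsilon)\to 0$ slowly enough that $\epsilon\,\|\nabla^2 u^{\delta(\epsilon)}\|_{L^\infty}\to 0$. Setting $u_\epsilon:=u^{\delta(\epsilon)}_\epsilon$ and passing to a subsequence yields $u^{\tau,B}_{\epsilon,|\xi_0|}\to u$ in $W^{1,2}(\Omega,\mathbb{R}^n)$ simultaneously for every admissible tuple.

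The main obstacle is the uniformity of the interpolation constant across all the auxiliary triangulations; this rests precisely on (i) the finiteness of the set of matrices $B$, which forces a uniform lower bound on simplex non-degeneracy, and (ii) the inclusion $\tau+\epsilon B\mathbb{Z}^n\subset\epsilon\mathbb{Z}^n$, which lets one use a single discrete map to drive all the interpolants at once. A minor technical point is that $u^{\tau,B}_{\epsilon,|\xi_0|}$ is a priori only defined on $U^{\tau,B}_{\epsilon,|\xi_0|}\subset\Omega$; since $u_\epsilon$ is in fact defined on all of $\mathbb{R}^n$ through $\mathcal{T}_{\epsilon,n}$, one can extend $u^{\tau,B}_{\epsilon,|\xi_0|}$ globally by using the full triangulation $\tau+B\mathcal{T}_{\epsilon,n}$, whose cells exhaust $\Omega$ as $\epsilon\to 0$, thus making the $W^{1,2}(\Omega)$-norm meaningful for all small $\epsilon$.
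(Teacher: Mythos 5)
Your argument is correct and follows essentially the same route as the paper's proof: approximate $u$ by compactly supported smooth functions, take the $\mathbb{P}_1$-interpolant on $\mathcal{T}_{\epsilon,n}$, invoke the classical finite-element error estimate (uniformly over the finitely many triangulations $\tau+B\mathcal{T}_{\epsilon,n}$, whose nodes form sublattices of $\epsilon\mathbb{Z}^n$), and conclude by a diagonal argument. The only cosmetic difference is that you make the smoothing parameter and the $\|\nabla^2 u^\delta\|_{L^\infty}$ dependence explicit, whereas the paper works with an abstract approximating sequence $u_k$ and thresholds $\epsilon_k$.
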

\begin{proof}
Approximate $u$ by $u_k\in \mathcal{C}^\infty_0(\mathbb{R}^n,
\mathbb{R}^n)$, so that $u_k\to u$ in $W^{1,2}(\Omega,\mathbb{R}^n)$. Fix
$|\xi_0|\leq M$, $\zeta\in S_{|\xi_0|}, $ $B\in K_\zeta$ and $\tau\in
V_B$. Then, by the fundamental estimate of finite elements \cite{Ciar},
the $\mathbb{P}_1$-interpolation $u_{\epsilon, k}$ of $u_k$ on
$\mathcal{T}_{\epsilon, n}$, i.e. the continuous function affine on
the simplices in $\mathcal{T}_{\epsilon, n}$ which coincides with
$u_k$ on $\epsilon\mathbb{Z}^n$, satisfies:
$$\|u_{\epsilon, k} - u_k\|_{W^{1,2}(\Omega)}\leq \frac{1}{k} \qquad \forall
\epsilon\leq \epsilon_k.$$
Likewise, because the set of all involved quantities $|\xi_0|, \zeta, B, \tau$ 
is finite, it follows that:
$$ \|(u_{\epsilon, k})_{\epsilon, |\xi_0|}^{\tau, B} - u_k\|_{W^{1,2}(\Omega)}\leq
\frac{1}{k} $$ 
if only $\epsilon\leq \epsilon_k$ is sufficiently small. We set  $u_\epsilon:= u_{\epsilon_k, k}$ which 
satisfies the claim of the Lemma.
\end{proof}

We now observe a compactness property of $E_\epsilon$,
which together with the $\Gamma$-convergence of $F_\epsilon$ to
$\mathcal{F}$, implies convergence of the minimizers of $E_\epsilon$
to the minimizers of $\mathcal{F}$ (see Theorem \ref{thcom}).

\begin{lemma}\label{com}
Assume that $E_\epsilon(u_\epsilon)\leq C$, for some sequence of
discrete deformations $u_\epsilon:\epsilon
\mathbb{Z}^n\cap\Omega\to\mathbb{R}^n$, which we identify with
$u_\epsilon\in\mathcal{C}(\Omega)$ that are piecewise affine on
$\mathcal{T}_{\epsilon, n}\cap\Omega$ and agree with the discrete
$u_\epsilon$ at each node of the lattice.
Then there exist constants $c_\epsilon\in\mathbb{R}^n$ such that
$u_\epsilon- c_\epsilon$ converges (up to a subsequence) in
$L^{2}(\Omega, \mathbb{R}^n)$ to some $u\in W^{1,2}(\Omega, \mathbb{R}^n)$.
\end{lemma}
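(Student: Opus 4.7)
The plan is to show that the piecewise affine extensions $u_\epsilon$ are uniformly bounded in $W^{1,2}(\Omega,\mathbb{R}^n)$ modulo additive constants, from which the conclusion will follow by Rellich-Kondrachov compactness. The crucial tool is the elementary scalar inequality $(t-1)^2 \geq \frac{1}{2}t^2 - 1$, valid for every $t\in\mathbb{R}$, which converts a bound on $E_\epsilon$ into a bound on the squared finite differences.

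The first step is to apply this inequality with $t = |u_\epsilon(\alpha+\epsilon\xi)-u_\epsilon(\alpha)|/(\epsilon |A(\alpha)\xi|)$ in every summand of $E_\epsilon(u_\epsilon)$, and combine it with the uniform bounds $c_1 |\xi| \leq |A(\alpha)\xi| \leq c_2 |\xi|$ coming from the smoothness and positive definiteness of $G$. Since $|R^\xi_\epsilon(\Omega)| = O(\epsilon^{-n})$, the contribution of the subtracted constants is $O(1)$, and hence for every fixed $\xi \in \mathbb{Z}^n$ with $\psi(|\xi|)>0$ one obtains the discrete $H^1$-type estimate
\[
\sum_{\alpha\in R^\xi_\epsilon(\Omega)}\epsilon^{n-2}|u_\epsilon(\alpha+\epsilon\xi)-u_\epsilon(\alpha)|^2 \leq C\bigl(E_\epsilon(u_\epsilon)+1\bigr) \leq C.
\]

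The second step is to convert this into a bound on the actual gradient. On each simplex $T_\alpha^\pi \in \mathcal{T}_{\epsilon,n}\cap\Omega$, the piecewise affine extension $u_\epsilon$ is determined by its values at the $n+1$ vertices, so $\nabla u_\epsilon\big|_{T_\alpha^\pi}$ is a fixed linear combination of the $n$ nearest-neighbour differences $\epsilon^{-1}(u_\epsilon(\alpha+\epsilon\sum_{k=1}^j e_{\pi(k)})-u_\epsilon(\alpha+\epsilon\sum_{k=1}^{j-1} e_{\pi(k)}))$. Squaring and summing the corresponding integrals over all simplices gives $\|\nabla u_\epsilon\|_{L^2(\Omega)}^2 \leq C$. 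Setting $c_\epsilon := \fint_\Omega u_\epsilon$, Poincaré's inequality then yields $\|u_\epsilon - c_\epsilon\|_{W^{1,2}(\Omega,\mathbb{R}^n)} \leq C$, and extracting an $L^2$-convergent subsequence whose limit lies in $W^{1,2}(\Omega,\mathbb{R}^n)$ is now classical.

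The main obstacle is matching the $\xi$-directions available in $E_\epsilon$ (those with $\psi(|\xi|)>0$) with the coordinate directions $\pm e_i$ which parametrize the edges of the simplices in $\mathcal{T}_{\epsilon,n}$. In the natural setting where $\psi(1)\neq 0$, coordinate increments appear directly in the energy and the argument is immediate; in the general case one rewrites each coordinate-direction increment as a telescoping sum along admissible $\xi$'s (or equivalently, uses the alternative triangulations $BT_\alpha^\pi$ constructed in Sections \ref{sec3} and \ref{sec44}, whose edges are of length $\epsilon|\xi_0|$). A secondary technicality is that the piecewise affine extension is, strictly speaking, only defined on the union of simplices contained in $\Omega$, but the uncovered region has measure $O(\epsilon)$ and can be filled in by any fixed bounded rule without affecting the $L^2$-convergence on $\Omega$.
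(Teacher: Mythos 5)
Your overall strategy --- coercivity of the energy summands giving a discrete $H^1$-type bound, then a gradient bound for an affine interpolant, then Poincar\'e and weak/Rellich compactness --- is the same mechanism the paper uses, but you run it directly on the finite differences, whereas the paper runs it through the integral representation $I_\epsilon$ of Section \ref{rep3}: from $E_\epsilon(u_\epsilon)\leq C$ it extracts, for a single admissible length $|\xi_0|$ and a single lattice matrix $B$, the bound $\int W(\nabla u^{0,B}_{\epsilon,|\xi_0|}\lambda^{0,B}_{\epsilon,|\xi_0|})\leq C$, deduces $\|\nabla u^{0,B}_{\epsilon,|\xi_0|}\|_{L^2(\Omega_\eta)}\leq C$, and then transfers the compactness back to $u_\epsilon$ via the $\mathbb{P}_1$-interpolation error estimate of \cite{Ciar}. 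In the nearest-neighbour setting ($\psi(1)\neq 0$) your version is correct and essentially equivalent to the paper's.

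The genuine gap is in your treatment of the general case. The claim that a coordinate increment can be rewritten ``as a telescoping sum along admissible $\xi$'s'' fails whenever the admissible interaction vectors generate a proper sublattice of $\mathbb{Z}^n$. The model example is Case 3 of the paper: pure next-to-nearest-neighbour interactions in $\mathbb{R}^2$, where the controlled directions $(\pm 1,\pm 1)$ generate only $\{(a,b)\in\mathbb{Z}^2:\ a+b \mbox{ even}\}$, so no chain of admissible steps connects $\alpha$ to $\alpha+\epsilon e_1$. Concretely, if $u_\epsilon$ equals $0$ on one coset of this sublattice and $1$ on the other, then every admissible pair of nodes lies in the same coset, so $E_\epsilon(u_\epsilon)=O(1)$, while the extension on the standard triangulation $\mathcal{T}_{\epsilon,n}$ has $\|\nabla u_\epsilon\|_{L^2(\Omega)}\sim \epsilon^{-1}$; your asserted bound $\|\nabla u_\epsilon\|^2_{L^2(\Omega)}\leq C$ is therefore false in general. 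Your parenthetical fallback (``use the alternative triangulations $BT_\alpha^\pi$'') is the right idea but is not ``equivalent'': it yields a gradient bound and compactness for the lattice-adapted extensions $u^{\tau,B}_{\epsilon,|\xi_0|}$, not for the standard-triangulation extension $u_\epsilon$ named in the statement, and one still needs the additional step the paper supplies --- the interpolation comparison $\|u^{0,B}_{\epsilon,|\xi_0|}-u_\epsilon\|_{L^2(\Omega_\eta)}\leq C\epsilon|\xi_0|\,\|u_\epsilon\|_{W^{1,2}(\Omega_\eta)}$ --- to pass the $L^2$-limit from one extension to the other. That transfer is where the real difficulty of the general case sits, and it is absent from your write-up.
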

\begin{proof}
Observe that for every $|\xi_0|, \tau, B$ as in (\ref{Iepsilon}),
(\ref{form_case}), and every $\epsilon\leq \epsilon_0$:
\begin{equation}\label{m3}
\int_{U^{\tau, B}_{\epsilon, |\xi_0|}} W(\nabla u^{\tau,
  B}_{\epsilon, |\xi_0|}(x) \lambda^{\tau, B}_{\epsilon, |\xi_0|}(x))
~ \mbox{d}x \leq  C.
\end{equation}
Thus in particular, for some $\xi_0\in\mathbb{Z}^n$ such that
$\psi(|\xi_0|)\neq 0$, and for every $\eta>0$:
\begin{equation*}
\|\nabla u^{0,B}_{\epsilon, |\xi_0|}\|_{L^{2}(\Omega_\eta)} \leq C.
\end{equation*}
if only $\epsilon\leq \epsilon_0$ is small enough.
Fix $\eta>0$. The above bound implies that $\nabla
u^{0, B}_{\epsilon, |\xi_0|}$ converges weakly (up to a
subsequence) in $L^{2}(\Omega_\eta)$, which by means of the Poincar\'e
inequality yields weak convergence of $u^{0, B}_{\epsilon, |\xi_0|} -
c_\epsilon$ in $W^{1,2}(\Omega_\eta)$.
We now observe that:
$$\|u_{\epsilon, |\xi_0|}^{0, B} - u_\epsilon\|_{L^2(\Omega_\eta)}
\leq C \epsilon|\xi_0| \|u_\epsilon\|_{W^{1,2}(\Omega_\eta)},$$
because $u_{\epsilon, |\xi_0|}^{\tau, B}$ is a $\mathbb{P}_1$
interpolation of $u_\epsilon$ on the lattice
$\epsilon B\mathbb{Z}^n\cap \Omega_\eta$, allowing to use the
classical finite element error estimate in \cite[Theorem 3.1.6]{Ciar}. This ends the proof.
\end{proof}

\begin{theorem}\label{thmain}
We have:
\begin{equation}\label{bounds}
\forall u\in W^{1, 2}(\Omega, \mathbb{R}^{n}) \qquad I_Q(u) \leq
\mathcal{F}(u) \leq I(u),
\end{equation}
where:
\begin{equation}\label{Ilimit2}
\begin{split}
I_{Q}(u) & = \sum_{1\leq |\xi_0|\leq M} ~~\sum_{\zeta \in S_{|\xi_0|}, B\in K_\zeta} 
\psi(|\xi_0|) \frac{(1+ |V_{B}|)}{ (nk)
|\det B|}  \int_\Omega QW(\nabla u(x) \lambda^{B}_{|\xi_0|}(x))~\mathrm{d}x,\\
I(u) & = \sum_{1\leq |\xi_0|\leq M} ~~\sum_{\zeta\in S_{|\xi_0|}, B\in K_\zeta} 
 \psi(|\xi_0|) \frac{(1+ |V_{B}|)}{(nk) |\det B|} 
 \int_\Omega W(\nabla u(x) \lambda^{B}_{|\xi_0|}(x))~\mathrm{d}x,
\end{split}
\end{equation}
and where $\lambda^{B}_{|\xi_0|}(x)$ is given by:
\begin{equation}\label{lambda}
\lambda^{B}_{|\xi_0|}(x) = |\xi_0| B~\mathrm{diag}\left\{|A(x) Be_j|^{-1}\right\}_{j=1}^n.
\end{equation}
\end{theorem}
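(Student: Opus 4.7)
The plan is to establish the two inequalities in (\ref{bounds}) separately, using the integral representation (\ref{form_case})--(\ref{Iepsilon}) together with the boundary-error bound (\ref{error}) as the bridge between the discrete energy $E_\epsilon$ and integrals of the form $\int_\Omega W(\nabla u \cdot \lambda)\,\mathrm{d}x$. For the upper bound $\mathcal{F}(u) \leq I(u)$, given $u \in W^{1,2}(\Omega, \mathbb{R}^n)$, I take as recovery sequence the one provided by Lemma \ref{auxiliary_functions}: continuous functions $u_\epsilon$, piecewise affine on $\mathcal{T}_{\epsilon, n}$, with $u_\epsilon \to u$ in $W^{1,2}$ and with every one of the finitely many alternative piecewise affine extensions $u^{\tau, B}_{\epsilon, |\xi_0|}$ also converging to $u$ in $W^{1,2}$. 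By definition $F_\epsilon(u_\epsilon) = E_\epsilon(u_\epsilon)$, and (\ref{error}) bounds this above by $I_\epsilon(u_\epsilon)$ plus a boundary-layer term; the latter is controlled by a uniform bound on the discrete difference quotients of $u_\epsilon$ (inherited from the smoothing step in Lemma \ref{auxiliary_functions}) times $|\overline{\Omega \setminus \Omega_{\epsilon \sqrt{n} M}}| = O(\epsilon)$, and hence vanishes. Passing to the limit in $I_\epsilon(u_\epsilon)$ then relies on the uniform convergence $\lambda^{\tau, B}_{\epsilon, |\xi_0|} \to \lambda^B_{|\xi_0|}$ (a direct consequence of the smoothness of $A$ and the explicit formulas for $\lambda^{\tau, B}_{\epsilon, |\xi_0|}$), combined with the strong $W^{1,2}$ convergence of $u^{\tau, B}_{\epsilon, |\xi_0|}$ and continuity of $W$, giving $I_\epsilon(u_\epsilon) \to I(u)$ term by term.

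For the lower bound $\mathcal{F}(u) \geq I_Q(u)$, I start with an arbitrary sequence $u_\epsilon \to u$ in $L^2$ with uniformly bounded $F_\epsilon(u_\epsilon) \leq C$ (otherwise the inequality is trivial). From (\ref{error}) one has $F_\epsilon(u_\epsilon) \geq I_\epsilon(u_\epsilon)$, while (\ref{m3}) together with the coercivity of $W$ yields an $L^2$-bound on $\nabla u^{\tau, B}_{\epsilon, |\xi_0|}$ on every subdomain $\Omega_\eta$; the finite-element comparison used in the proof of Lemma \ref{com} identifies, along a common subsequence, the weak $W^{1,2}(\Omega_\eta)$ limit of each $u^{\tau, B}_{\epsilon, |\xi_0|}$ with $u$. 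Since $QW$ is convex by Lemma \ref{quasiconv. W}, the functional $v \mapsto \int_{\Omega_\eta} QW(\nabla v \, \lambda^B_{|\xi_0|}) \, \mathrm{d}x$ is sequentially weakly lower semicontinuous on $W^{1,2}(\Omega_\eta)$; combined with $W \geq QW$ and the uniform convergence of $\lambda^{\tau, B}_{\epsilon, |\xi_0|}$, this produces $\liminf_\epsilon \int_{\Omega_\eta} W(\nabla u^{\tau, B}_{\epsilon, |\xi_0|} \, \lambda^{\tau, B}_{\epsilon, |\xi_0|}) \, \mathrm{d}x \geq \int_{\Omega_\eta} QW(\nabla u \, \lambda^B_{|\xi_0|}) \, \mathrm{d}x$. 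Letting $\eta \to 0$ and summing over $\tau \in \{0\} \cup V_B$ (producing the factor $1 + |V_B|$, since each $\tau$ contributes the same limit), $B \in K_\zeta$, $\zeta \in S_{|\xi_0|}$, and $1 \leq |\xi_0| \leq M$ yields $\liminf_\epsilon F_\epsilon(u_\epsilon) \geq I_Q(u)$.

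The main obstacle is located in the lower bound: the $\epsilon$-dependent piecewise constant matrix field $\lambda^{\tau, B}_{\epsilon, |\xi_0|}$ multiplies the only weakly convergent gradient $\nabla u^{\tau, B}_{\epsilon, |\xi_0|}$, so a direct appeal to weak lower semicontinuity is not available. I would resolve this by writing $\int_{\Omega_\eta} QW(\nabla u^{\tau, B}_{\epsilon, |\xi_0|} \, \lambda^{\tau, B}_{\epsilon, |\xi_0|}) \, \mathrm{d}x = \int_{\Omega_\eta} QW(\nabla u^{\tau, B}_{\epsilon, |\xi_0|} \, \lambda^B_{|\xi_0|}) \, \mathrm{d}x + r_\epsilon$ and controlling the remainder by $|r_\epsilon| \leq C \|\lambda^{\tau, B}_{\epsilon, |\xi_0|} - \lambda^B_{|\xi_0|}\|_{L^\infty}(1 + \|\nabla u^{\tau, B}_{\epsilon, |\xi_0|}\|_{L^2(\Omega_\eta)}^2)$ via the quadratic growth of $QW$; the uniform convergence of $\lambda^{\tau, B}_{\epsilon, |\xi_0|}$ and the a priori gradient bound then send $r_\epsilon \to 0$, after which weak lower semicontinuity can be applied to the cleanly separated $\epsilon$-independent integrand in the first term.
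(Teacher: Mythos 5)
Your proposal is correct and follows essentially the same route as the paper: the recovery sequence from Lemma \ref{auxiliary_functions} combined with the boundary estimate (\ref{error}) for the upper bound, and the finite-element identification of the weak $W^{1,2}(\Omega_\eta)$ limit together with convexity of $QW$ and weak lower semicontinuity for the lower bound. The only differences are in the execution of two technical sub-steps: the paper controls the boundary layer via $\|\nabla u_\epsilon\|^2_{L^2(\Omega\setminus\Omega_{\epsilon\sqrt{n}M})}\to 0$ (equi-integrability of the strongly convergent gradients) rather than your sup bound on difference quotients, and it disposes of the $\epsilon$-dependence of $\lambda^{\tau,B}_{\epsilon,|\xi_0|}$ by observing that $\nabla u^{\tau,B}_{\epsilon,|\xi_0|}\,\lambda^{\tau,B}_{\epsilon,|\xi_0|}\rightharpoonup\nabla u\,\lambda^{B}_{|\xi_0|}$ weakly in $L^2$ (weak times uniformly convergent) and applying Theorem \ref{convex} directly, rather than via your remainder term $r_\epsilon$ --- both versions are sound.
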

\begin{proof}
{\bf 1.} Let $u\in W^{1,2}(\Omega, \mathbb{R}^n)$ and consider the
approximating sequence $u_\epsilon$ as in Lemma
\ref{auxiliary_functions}. Directly from the definition of $\Gamma$-convergence
(see (\ref{def_gamma})), we obtain:
\begin{equation}\label{m1}
\mathcal{F}(u) \leq \liminf_{\epsilon\to 0} F_\epsilon(u_\epsilon) =
\liminf_{\epsilon\to 0} E_\epsilon(u_\epsilon).
\end{equation}
Further,  in view of the boundedness of $\psi$, and of the sequence
$\|\nabla u_\epsilon\|_{L^2(\Omega)}$, (\ref{error}) implies:
\begin{equation}\label{m2}
\begin{split}
0\leq E_\epsilon(u_\epsilon) - I_\epsilon(u_\epsilon) & \leq
C\sum_{\xi\in\mathbb{Z}^n, 1\leq |\xi| \leq M} ~~
\sum_{ \alpha \in R_\epsilon^{\xi} (\overline{\Omega \setminus
    \Omega_{\epsilon\sqrt{n}M}})} \epsilon^{n} \left( \Big \vert \dfrac{ 
  u_\epsilon(\alpha + \epsilon \xi) - u_\epsilon (\alpha)}{\epsilon
  |\xi|} \Big \vert^{2} + 1\right)\\ 
& \leq C\left(\|\nabla u_\epsilon\|^2_{L^2(\Omega\setminus\Omega_{\epsilon\sqrt{n}M})} +
  |\Omega\setminus\Omega_{\epsilon\sqrt{n}M}|\right) \to 0 \quad \mbox{ as }
\epsilon\to 0.
\end{split}
\end{equation}
Indeed, the third inequality in (\ref{m2}) can be proven by the same
argument as in the proof of Lemma
\ref{auxiliary_functions}. Alternatively, a direct proof can be
obtained as follows. Since $u_\epsilon$ is piecewise affine, we have:
$$\left|\frac{u_\epsilon(\alpha+\epsilon\xi) -
    u_\epsilon(\alpha)}{\epsilon|\xi|}\right|^2 =
\left|\int_0^1\langle \nabla u_\epsilon (\alpha+t\epsilon\xi),
  \frac{\xi}{|\xi|}\rangle~\mbox{d}t\right|^2 \leq \int_0^1 q_\epsilon
(\alpha+t\epsilon\xi)^2 ~\mbox{d}t,$$
where $q_\epsilon(p) = \sup_i \langle\nabla u_\epsilon(p), v_i\rangle$
when $p$ is an interior point of a face of the trangulation
$\mathcal{T}_{\epsilon, n}$ spanned by unit vectors $v_1,\ldots v_k$
(here $0\leq k\leq n$). Note that:
$$q_\epsilon(p)^2 \leq \frac{n!}{\epsilon^n}\int_T |\nabla
u_\epsilon|^2 \qquad \forall p\in T\in\mathcal{T}_{\epsilon, n}.$$
We hence obtain:
\begin{equation*}
\begin{split}
\forall 1\leq |\xi| \leq M \quad
&\sum_{ \alpha \in R_\epsilon^{\xi} (\overline{\Omega \setminus
    \Omega_{\epsilon\sqrt{n}M}})} \epsilon^{n} \left |\frac{u_\epsilon(\alpha + \epsilon \xi) - u_\epsilon (\alpha)}{\epsilon
  |\xi|} \right|^2 \leq \int_0^1 \sum_{ \alpha \in R_\epsilon^{\xi} (\overline{\Omega \setminus
    \Omega_{\epsilon\sqrt{n}M}})} \epsilon^n
q_\epsilon(\alpha+\epsilon\xi)^2~\mbox{d}t \\ & \quad \leq C\int_0^1 \left(
  \sum_{ \alpha} \int_T|\nabla
u_\epsilon|^2\right)~\mbox{d}t \leq C \int_0^1\|\nabla u_\epsilon\|^2_{L^2(\Omega \setminus 
    \Omega_{\epsilon\sqrt{n}M})} ~\mbox{d}t  =
\|\nabla u_\epsilon\|^2_{L^2(\Omega \setminus 
    \Omega_{\epsilon\sqrt{n}M})},
\end{split}
\end{equation*}
which achieves (\ref{m2}).

Consequently, by (\ref{m1}), (\ref{m2}), we see that:
$$\mathcal{F}(u) \leq\liminf_{\epsilon\to 0} I_\epsilon(u_\epsilon), $$
so that by Lemma \ref{auxiliary_functions} and
using the dominated convergence theorem, we obtain: 
\begin{equation}\label{sth}
\begin{split}
\mathcal{F}(u) \leq  \liminf_{\epsilon\to 0} \sum_{1\leq |\xi_0|\leq M}
~~\sum_{\zeta\in S_{|\xi_0|}, B\in K_\zeta} 
\psi(|\xi_0|) \frac{(1+ |V_{B}|)}{(nk) |\det B|} 
\int_\Omega W(\nabla u^{\tau, B}_{\epsilon, |\xi_0|}(x)
\lambda^{\tau,B}_{\epsilon, |\xi_0|}(x))~\mbox{d}x = I(u),
\end{split}
\end{equation}
in view of the uniform convergence of $\lambda_{\epsilon,
  |\xi_0|}^{\tau, B}$ to $\lambda_{|\xi_0|}^{B}$ in $\Omega$.
The proof of the upper bound for $\mathcal{F}$ in (\ref{bounds}) is hence accomplished.

\medskip

{\bf 2.} We now show the lower bound in (\ref{bounds}). Let $u \in W^{1,
  2}(\Omega, \mathbb{R}^{n})$; note that the upper bound proved above yields:
$\mathcal{F}(u)<\infty$. Therefore, $u$ has a recovery sequence $u_\epsilon
\in\mathcal{C}(\Omega)$ affine on $\mathcal{T}_{\epsilon,
  n}\cap\Omega$, such that: $u_\epsilon\to u$ in $L^2(\Omega,\mathbb{R}^n)$
and $E_\epsilon(u_\epsilon) \to\mathcal{F}(u)$ as $\epsilon\to 0$.

As in the proof of Lemma \ref{com}, we see that (\ref{m3}) holds for 
every $|\xi_0|, \tau, B$ as in (\ref{Iepsilon}),
(\ref{form_case}). Thus, for every $\eta>0$ we have:
\begin{equation}\label{bound}
\|\nabla u^{\tau, B}_{\epsilon, |\xi_0|}\|_{L^{2}(\Omega_\eta)} \leq C,
\end{equation}
for every $\epsilon\leq \epsilon_0$ is small enough.
Fix $\eta>0$. The bound (\ref{bound}) implies that every $\nabla
u^{\tau, B}_{\epsilon, |\xi_0|}$ converges weakly (up to a
subsequence) in $L^{2}(\Omega_\eta)$. Next, we note that $u^{\tau, B}_{\epsilon, |\xi_0|}$ 
converges to $u$ in $L^{2}(\Omega_\eta)$, which yields that the
same convergence is also valid weakly in $W^{1,2}(\Omega_\eta)$.

Indeed, by \cite[Theorem 3.1.6]{Ciar}, we have:
$$\|u_{\epsilon, |\xi_0|}^{\tau, B} - u_\epsilon\|_{L^2(\Omega_\eta)}
\leq C \epsilon|\xi_0| \|u_\epsilon\|_{W^{1,2}(\Omega_\eta)},$$
because $u_{\epsilon, |\xi_0|}^{\tau, B}$ is a $\mathbb{P}_1$
interpolation of $u_\epsilon$ on the lattice
$\epsilon B\mathbb{Z}^n\cap \Omega_\eta$. Consequently, in view of
(\ref{bound}):
$$\|u_{\epsilon, |\xi_0|}^{\tau, B} - u\|_{L^2(\Omega_\eta)} \leq 
\|u_{\epsilon, |\xi_0|}^{\tau, B} - u_\epsilon\|_{L^2(\Omega_\eta)} +
\|u_\epsilon- u \|_{W^{1,2}(\Omega_\eta)} \leq 
C \epsilon + \|u_\epsilon- u \|_{W^{1,2}(\Omega_\eta)}  \to 0,~ \mbox{
  as } \epsilon\to 0.$$

Since $QW\geq W$, we further obtain:
\begin{equation*}
\begin{split}
\mathcal{F}(u) & = \lim_{\epsilon \to 0}F_\epsilon(u_\epsilon) \geq
\limsup_{\epsilon \to 0}I_\epsilon({u_\epsilon}_{\mid \Omega_\eta})\\
& \geq \sum_{1\leq |\xi_0| \leq M} ~~ \sum_{\zeta\in
  S_{|\xi_0|}}\frac{\psi(|\xi_0|)}{(nk)} \sum_{B\in K_\zeta} \frac{1}{|\det B|} 
\sum_{\tau\in\{0\}\cup V_{l,B}}
\liminf_{\epsilon \rightarrow 0} \int_{\Omega_\eta}QW(\nabla u^{\tau, B}_{\epsilon, |\xi_0|}(x) \lambda^{\tau, B}_{\epsilon, |\xi_0|}(x))
~ \mbox{d}x \\  & \geq \sum_{1\leq |\xi_0| \leq M} ~~ \sum_{\zeta\in
  S_{|\xi_0|}, B \in K_\zeta} \frac{\psi(|\xi_0|)}{(nk)} \frac{1 + \vert V_{l, B} \vert}{|\det B|} 
\int_{\Omega_\eta} QW(\nabla u(x) \lambda^{B}_{|\xi_0|}(x)) ~
\mbox{d}x = I_Q(u_{\mid \Omega_\eta}),
\end{split}
\end{equation*}
where the last inequality above follows by the lower semicontinuity of the functional
$ \int_\Omega QW(v(x))~\mbox{d}x $
with respect to the weak topology of $L^{2}(\Omega_\eta, \mathbb{R}^{n
  \times n})$ (see Theorem \ref{convex}), and by the weak
convergence of $\nabla u^{\tau, B}_{\epsilon,
  |\xi_0|}\lambda^{\tau, B}_{\epsilon, |\xi_0|}$ to $\nabla u
\lambda^{B}_{|\xi_0|}$ in $L^{2}$. Since $\eta>0$ was
arbitrary, the proof is achieved.
\end{proof}

\begin{corollary}\label{Ffinite}
We have: $\mathcal{F}(u) < +\infty$ if and only if   $u \in W^{1, 2}(\Omega, \mathbb{R}^{n})$.
\end{corollary}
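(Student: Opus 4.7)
The plan is to prove the two implications separately, using the upper bound from Theorem \ref{thmain} for the easier direction and the compactness Lemma \ref{com} combined with $\Gamma$-convergence for the harder direction.

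For the implication $u\in W^{1,2}(\Omega,\mathbb{R}^n)\Rightarrow \mathcal{F}(u)<+\infty$, I would invoke the upper bound $\mathcal{F}(u)\le I(u)$ and show $I(u)<\infty$. The density $W$ defined in (\ref{W}) satisfies the trivial quadratic growth estimate $W(M)\le C(|M|^2+1)$ for all $M\in\mathbb{R}^{n\times n}$. On the other hand, each matrix field $\lambda^{B}_{|\xi_0|}(x)=|\xi_0| B\,\mathrm{diag}\{|A(x)Be_j|^{-1}\}_{j=1}^n$ is uniformly bounded on $\bar\Omega$, since $A\in\mathcal{C}^\infty(\bar\Omega,\mathbb{R}^{n\times n})$ is symmetric and strictly positive definite, so the quantities $|A(x)Be_j|^{-1}$ are bounded above. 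Since the outer sums in (\ref{Ilimit2}) are finite, this gives $I(u)\le C(1+\|\nabla u\|^2_{L^2(\Omega)})<\infty$.

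For the converse, suppose $u\in L^2(\Omega,\mathbb{R}^n)$ satisfies $\mathcal{F}(u)<+\infty$. By the definition of $\Gamma$-limit (see Theorem \ref{p}), there exists a recovery sequence $u_\epsilon\in L^2(\Omega,\mathbb{R}^n)$ with $u_\epsilon\to u$ in $L^2$ and $F_\epsilon(u_\epsilon)\to\mathcal{F}(u)$. By definition of $F_\epsilon$, each $u_\epsilon$ is continuous, piecewise affine on $\mathcal{T}_{\epsilon,n}\cap\Omega$, and $F_\epsilon(u_\epsilon)=E_\epsilon(u_\epsilon)$. In particular $E_\epsilon(u_\epsilon)$ is uniformly bounded, so Lemma \ref{com} yields constants $c_\epsilon\in\mathbb{R}^n$ and a limit $\tilde u\in W^{1,2}(\Omega,\mathbb{R}^n)$ such that $u_\epsilon-c_\epsilon\to\tilde u$ in $L^2(\Omega,\mathbb{R}^n)$ along a subsequence.

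Combining this with $u_\epsilon\to u$ in $L^2$, we see that $c_\epsilon=u_\epsilon-(u_\epsilon-c_\epsilon)\to u-\tilde u$ in $L^2(\Omega,\mathbb{R}^n)$. Since each $c_\epsilon$ is a constant vector in $\mathbb{R}^n$ and the space of constant functions is closed in $L^2(\Omega,\mathbb{R}^n)$, the limit $u-\tilde u$ is itself constant almost everywhere in $\Omega$. Consequently $u=\tilde u+c$ for some $c\in\mathbb{R}^n$ and hence $u\in W^{1,2}(\Omega,\mathbb{R}^n)$, as claimed. The only delicate point is to make sure that the translations $c_\epsilon$ produced by Lemma \ref{com} stay compatible with the $L^2$-convergence $u_\epsilon\to u$, which is immediate from the closedness of constants in $L^2$ on the bounded domain $\Omega$.
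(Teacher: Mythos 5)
Your proof is correct. The forward direction is the same as the paper's (the upper bound $\mathcal{F}\le I$ from Theorem \ref{thmain}, with the routine observation that $I(u)<\infty$ for $u\in W^{1,2}$ because $W$ has quadratic growth and the fields $\lambda^{B}_{|\xi_0|}$ are bounded on $\bar\Omega$ by positive definiteness of $A$). For the converse you take a slightly different route: the paper extracts the recovery sequence, notes that the uniform energy bound gives (\ref{m3}) and hence a uniform bound on $\|\nabla u_\epsilon\|_{L^2}$, and then concludes directly that $u_\epsilon\rightharpoonup u$ weakly in $W^{1,2}$ --- no normalizing constants are needed precisely because $u_\epsilon\to u$ in $L^2$ is already known. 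You instead invoke Lemma \ref{com} as a black box, which forces you to reconcile the translations $c_\epsilon$ with the given $L^2$-limit; your resolution (constants form a closed, finite-dimensional subspace of $L^2(\Omega,\mathbb{R}^n)$, so $c_\epsilon\to u-\tilde u$ forces $u-\tilde u$ to be constant and $u=\tilde u+c\in W^{1,2}$) is sound. The paper's version is marginally leaner since it bypasses the constants entirely; yours has the small advantage of reusing an already-stated lemma rather than repeating its gradient estimate.
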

\begin{proof}
By Theorem \ref{thmain}, $\mathcal{F}$ is finite on all $W^{1,2}$ deformations.
Conversely, let $u\in L^2(\Omega,\mathbb{R}^n)$ and let
$\mathcal{F}(u)<\infty$. Then there exists a recovery sequence
$u_\epsilon\in\mathcal{C}(\Omega)$ affine on $\mathcal{T}_{\epsilon,
  n}\cap \Omega$, so that $u_\epsilon\to u$ in $L^2$ and
$F_\epsilon(u_\epsilon)$ is uniformly bounded. This implies (\ref{m3})
so in particular $\|\nabla u_\epsilon\|_{L^2(\Omega)}^2$ is bounded
and hence (up to a subsequence) $u_\epsilon$ converges weakly in
$W^{1,2}(\Omega)$. Consequently, $u\in W^{1,2}(\Omega)$.
\end{proof}

\begin{corollary}\label{lsc_envelope}
Let $G_0(I)$ denote the sequentially weak lsc envelope of $I$ in $W^{1,
  2}(\Omega, \mathbb{R}^{n})$. Then:
$$\mathcal{F}(u) \leq G_0(I)(u) \qquad \forall u \in W^{1, 2}(\Omega, \mathbb{R}^{n}).$$
\end{corollary}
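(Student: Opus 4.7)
The plan is to derive the inequality by combining the pointwise bound $\mathcal{F}\leq I$ from Theorem \ref{thmain} with an abstract lower semicontinuity property of $\mathcal{F}$, and then appeal to the universal property of the sequentially weakly lsc envelope $G_0$.

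First I would recall that as a $\Gamma$-limit in the $L^{2}(\Omega,\mathbb{R}^n)$ topology, the functional $\mathcal{F}$ is automatically sequentially lower semicontinuous with respect to strong $L^{2}$-convergence. The next step is to upgrade this to sequentially weak lower semicontinuity on $W^{1,2}(\Omega,\mathbb{R}^n)$: if $u_k\rightharpoonup u$ weakly in $W^{1,2}(\Omega,\mathbb{R}^n)$, then by the Rellich--Kondrachov compact embedding (on the bounded domain $\Omega$) one has $u_k\to u$ strongly in $L^{2}(\Omega,\mathbb{R}^n)$, and therefore
\begin{equation*}
\mathcal{F}(u)\le\liminf_{k\to\infty}\mathcal{F}(u_k).
\end{equation*}
Hence $\mathcal{F}_{|W^{1,2}}$ is a sequentially weakly lsc functional on $W^{1,2}(\Omega,\mathbb{R}^n)$.

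Next, Theorem \ref{thmain} yields $\mathcal{F}(u)\le I(u)$ for every $u\in W^{1,2}(\Omega,\mathbb{R}^n)$. Thus $\mathcal{F}_{|W^{1,2}}$ is a sequentially weakly lsc functional lying below $I$. By the very definition of the sequentially weak lsc envelope, $G_0(I)$ is the largest functional on $W^{1,2}(\Omega,\mathbb{R}^n)$ which is sequentially weakly lsc and which is pointwise dominated by $I$. Consequently $\mathcal{F}\le G_0(I)$ on $W^{1,2}(\Omega,\mathbb{R}^n)$, as claimed.

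The only nontrivial ingredient is the weak-to-strong compactness used in passing from $L^{2}$ lsc to weak $W^{1,2}$ lsc; this is routine provided the compact embedding $W^{1,2}(\Omega)\hookrightarrow L^{2}(\Omega)$ holds for the domain at hand, which it does under the standing assumption that $\Omega$ is open, bounded and (implicitly) regular enough for the preceding results. No further analysis of the structure of $\mathcal{F}$ or $I$ is needed, so the corollary follows in a few lines.
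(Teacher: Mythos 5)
Your proof is correct and follows essentially the same route as the paper: the paper's one-line argument is precisely that the $\Gamma$-limit $\mathcal{F}$ is sequentially weakly lsc on $W^{1,2}(\Omega,\mathbb{R}^n)$, which combined with $\mathcal{F}\leq I$ from Theorem \ref{thmain} gives $\mathcal{F}\leq G_0(I)$ by the maximality of the envelope. You merely spell out the two standard ingredients (strong $L^2$ lower semicontinuity of $\Gamma$-limits plus the compact embedding $W^{1,2}\hookrightarrow L^2$) that the paper leaves implicit.
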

\begin{proof}
The proof is immediate since the $\Gamma$-limit $F$ is sequentially weak lsc
in $W^{1, 2}(\Omega, \mathbb{R}^{n})$.
\end{proof}

\section{The case of nearest-neighbour interactions}\label{sec5}

In this section we improve the result in (\ref{bounds}) to the
exact form of the limiting energy $\mathcal{F}$, in the special cases of near
and next-to-nearest-neighbour interactions.

\begin{theorem}\label{Fnear} (Case 1: nearest-neighbour interactions in $\mathbb{R}^2$.)
Let $\Omega\subset\mathbb{R}^2$ and let $\psi(1)=1$ and
$\psi(|\xi|)=0$ for all $|\xi|\geq \sqrt{2}$. 
Denote: $\lambda (x) = \mathrm{ diag} \left\{
\vert A(x)e_1\vert^{-1},  \vert A(x)e_2\vert^{-1}\right\}.$ Then:
\begin{equation}\label{bz}
\mathcal{F}(u) = \left\{\begin{array} {ll} {\displaystyle{2\int_\Omega QW(\nabla u(x)
    \lambda(x))\mathrm{d}x}} & \mbox{ for } u \in W^{1, 2}(\Omega,
    \mathbb{R}^{2})\\
+\infty & \mbox{ for } u \in L^2\setminus W^{1,2}.\end{array}\right.
\end{equation}
\end{theorem}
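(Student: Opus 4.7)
The plan is to deduce (\ref{bz}) by combining the bounds in Theorem \ref{thmain} with the classical relaxation theorem for integral functionals, and with Corollary \ref{Ffinite} for the degenerate case $u \notin W^{1,2}$. First I would unpack the definitions (\ref{Ilimit2}) of $I_Q(u)$ and $I(u)$ in the present setting: only $|\xi_0|=1$ contributes, $S_1 = \{\{1,0\}\}$ is a singleton, and the corresponding $K_\zeta$ consists of exactly four signed-permutation matrices $B$ (associated to $\pm e_1, \pm e_2$), each with $k=1$, $|\det B|=1$, $V_B=\emptyset$ and $nk=2$. Since the integrand $W$ in (\ref{W}) depends only on the norms of the columns of its argument, a direct check gives $W(\nabla u(x)\lambda^{B}_1(x)) = W(\nabla u(x)\lambda(x))$ for each of the four matrices $B$, and the same identity for $QW$ (which by Lemma \ref{quasiconv. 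W} inherits this structural property). Summing the four contributions with weight $\frac{1+|V_B|}{(nk)|\det B|}=\frac{1}{2}$ yields
$$I(u) = 2\int_\Omega W(\nabla u(x)\lambda(x))~\mathrm{d}x, \qquad I_Q(u) = 2\int_\Omega QW(\nabla u(x)\lambda(x))~\mathrm{d}x,$$
so that the lower bound $\mathcal{F}(u)\geq 2\int_\Omega QW(\nabla u\lambda)~\mathrm{d}x$ is immediate from Theorem \ref{thmain}.

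For the matching upper bound, I would invoke Corollary \ref{lsc_envelope} to bound $\mathcal{F}(u)\leq G_0(I)(u)$, and then compute $G_0(I)$ via the standard relaxation theorem for integral functionals with Carath\'eodory integrand of standard $p=2$ growth (Dacorogna's book, Ch.~9). The integrand $f(x,F):=W(F\lambda(x))$ is continuous jointly in $(x,F)$, and since $\lambda$ is smooth and uniformly invertible on $\bar\Omega$, satisfies $0\leq f(x,F)\leq C(1+|F|^2)$ uniformly in $x$. The relaxation theorem then gives
$$G_0(I)(u) = 2\int_\Omega Q_F f(x,\nabla u(x))~\mathrm{d}x,$$
where $Q_F f(x,\cdot)$ denotes the quasiconvexification in the second variable at frozen $x$. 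Because $F\mapsto F\lambda(x)$ is a linear isomorphism, a short computation in the spirit of the proof of Lemma \ref{quasiconv. W} identifies $Q_F[W(\cdot\,\lambda(x))](F)$ with $QW(F\lambda(x))$, so that $G_0(I)(u) = 2\int_\Omega QW(\nabla u\lambda)~\mathrm{d}x$, matching the lower bound on all of $W^{1,2}$. For $u\in L^2(\Omega,\mathbb{R}^2)\setminus W^{1,2}(\Omega,\mathbb{R}^2)$ the value $\mathcal{F}(u)=+\infty$ is precisely Corollary \ref{Ffinite}.

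The step I expect to require the most care is the application of the relaxation theorem to the $x$-dependent integrand $W(F\lambda(x))$: one must verify the joint Carath\'eodory and growth conditions, and correctly identify the quasiconvex envelope $Q_F$ via the change of variables $F\mapsto F\lambda(x)$ at frozen $x$. Both are routine given the smoothness and uniform invertibility of $\lambda$ on $\bar\Omega$, but they are the only points at which the freezing-plus-relaxation argument could go astray; once they are settled, the chain $I_Q\leq \mathcal{F}\leq G_0(I) = I_Q$ closes the proof.
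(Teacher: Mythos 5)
Your proposal is correct and follows essentially the same route as the paper: the lower bound comes from Theorem \ref{thmain} after identifying $I_Q$ and $I$ in this special case, the upper bound from Corollary \ref{lsc_envelope} combined with the relaxation result (Theorem \ref{qconvex}) applied to $f(x,M)=W(M\lambda(x))$ with $Qf(x,M)=QW(M\lambda(x))$, and the $L^2\setminus W^{1,2}$ case from Corollary \ref{Ffinite}. The only cosmetic difference is that you recover $I_Q$ and $I$ by counting lattices in the general formula (\ref{Ilimit2}) (as the paper does in Theorem \ref{th2}), whereas the paper's proof of this theorem reads them off directly from the two-dimensional representation (\ref{I1d}); the conclusion and all essential steps coincide.
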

\begin{proof}
From Theorem \ref{thmain} and (\ref{I1d}), we see that $I_Q(u) =
2\int_\Omega QW(\nabla u \lambda(x))~\mbox{d}x$ and $I(u) =
2\int_\Omega W(\nabla u \lambda(x))~\mbox{d}x$.
By Corollary \ref{lsc_envelope} it follows that:
$$\mathcal{F}(u) \leq G_0\Big(2\int_\Omega W(\nabla u
(x)\lambda(x))~\mbox{d}x\Big) = 2\int_\Omega QW(\nabla u \lambda(x))~\mbox{d}x. $$ 
The last equality is a consequence of Theorem \ref{qconvex} because
the function $f(x, M) = W(M \lambda(x))$ clearly satisfies the bounds
(\ref{mass}) and also its quasiconvexification with respect to $M$ equals:
$$Qf(x, M) = QW(M\lambda(x)).$$
The proof is now complete in view of Corollary \ref{Ffinite}.
\end{proof}

\begin{theorem}\label{th2} (Case 2: nearest-neighbour interactions in $\mathbb{R}^n$.)
Let $\Omega\subset\mathbb{R}^n$ and let $\psi(1)=1$ and $\psi(|\xi|)=0$
for $|\xi|\geq \sqrt{n}$. Denote: $\lambda (x) = \mathrm{ diag} \left\{ |A(x)e_j|^{-1}\right\}_{j = 1}^{n}$.
Then, the $\Gamma-$limit $\mathcal{F}$ has the form as in (\ref{bz}):
\begin{equation}\label{bz2}
\mathcal{F}(u) = \left\{\begin{array} {ll} {\displaystyle{2\int_\Omega QW(\nabla u(x)
    \lambda(x))~\mathrm{d}x}} & \mbox{ for } u \in W^{1, 2}(\Omega,
    \mathbb{R}^{n})\\
+\infty & \mbox{ for } u \in L^2\setminus W^{1,2}.\end{array}\right.
\end{equation}
\end{theorem}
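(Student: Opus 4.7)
The plan is to mirror the proof of Theorem \ref{Fnear}, adapted to the $n$-dimensional setting and to the representation from Case 2 in Section \ref{rep2}. The first step is to show that the general bounds in Theorem \ref{thmain} collapse, in this case, to $I_Q(u) = 2\int_\Omega QW(\nabla u \,\lambda(x))\,\mathrm{d}x$ and $I(u) = 2\int_\Omega W(\nabla u\,\lambda(x))\,\mathrm{d}x$. Only $|\xi_0|=1$ contributes, so $S_{|\xi_0|}$ reduces to the single unordered tuple $\{1,0,\ldots,0\}$ with $k=1$; the set $K_\zeta$ then consists of exactly $2n$ signed permutation matrices $B$, each satisfying $|\det B|=1$ and $V_B=\emptyset$. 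For any such $B$, the columns of $\lambda^B_1(x) = B\,\mathrm{diag}\{|A(x)Be_j|^{-1}\}_{j=1}^n$ are $\pm|A(x)e_{\sigma(j)}|^{-1}\,e_{\sigma(j)}$ for the permutation $\sigma$ induced by $B$. Since both $W$ and (by Lemma \ref{quasiconv. W}) $QW$ depend only on the Euclidean norms of the columns of their argument, we obtain $W(\nabla u\,\lambda^B_1(x)) = W(\nabla u\,\lambda(x))$ and the analogous identity with $QW$. The coefficient $(1+|V_B|)/(nk|\det B|) = 1/n$ multiplied by $|K_\zeta|=2n$ then produces precisely the prefactor $2$.

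With these identifications, the upper bound $\mathcal{F}(u)\leq I(u)$ together with Corollary \ref{lsc_envelope} gives $\mathcal{F}(u)\leq G_0(I)(u)$, and Theorem \ref{qconvex} computes this sequentially weak lsc envelope as $2\int_\Omega QW(\nabla u\,\lambda(x))\,\mathrm{d}x$. The small auxiliary fact needed here is that the quasiconvexification of $M\mapsto W(M\lambda(x))$ equals $M\mapsto QW(M\lambda(x))$, which follows from the definition of quasiconvexification and the invertibility of the deterministic matrix $\lambda(x)$, exactly as in Theorem \ref{Fnear}. Combined with the lower bound $I_Q(u)\leq\mathcal{F}(u)$ from Theorem \ref{thmain}, this yields the equality in (\ref{bz2}) for every $u\in W^{1,2}(\Omega,\mathbb{R}^n)$.

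For $u\in L^2\setminus W^{1,2}$, the identity $\mathcal{F}(u)=+\infty$ is precisely the content of Corollary \ref{Ffinite}, so the piecewise definition in (\ref{bz2}) is established.

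The only nontrivial step is the collapse of the sum over $K_\zeta$ to the single prefactor $2$; it amounts to the combinatorial bookkeeping of the $2n$ signed permutations together with the invariance of $W$ under permutations and sign changes of the columns of its argument. This is routine rather than a genuine obstacle, and the remainder of the argument is a direct transcription of the two-dimensional proof.
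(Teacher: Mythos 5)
Your proposal is correct and follows essentially the same route as the paper: reduce the general bounds of Theorem \ref{thmain} to $I_Q(u)=2\int_\Omega QW(\nabla u\,\lambda)$ and $I(u)=2\int_\Omega W(\nabla u\,\lambda)$ by exploiting the invariance of $W$ and $QW$ under signed permutations of columns, then close the gap via Corollary \ref{lsc_envelope} and Theorem \ref{qconvex}, with Corollary \ref{Ffinite} handling $u\in L^2\setminus W^{1,2}$. The only (immaterial) divergence is in the bookkeeping of $S_1$: you take a single unordered tuple with $|K_\zeta|=2n$, whereas the paper lists $S_1=\{e_i\}_{i=1}^n$ with $|K|=2n$ in total; both yield the prefactor $2$.
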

\begin{proof}
The proof follows exactly as in Theorem \ref{Fnear}, using the
representation developed in section \ref{rep2}. Alternatively, using
the notation and 
setting of section \ref{rep3}, we see that $S_1=\{e_i\}_{i=1}^n$ and:
$$\forall \zeta\in S_1\quad N_\zeta = N=\{e_i, -e_i\}_{i=1}^n, ~~~\mbox{
  and }  K = \bigcup_{\zeta\in S_1} K_\zeta = \{B=\pm[e_i,e_{i+1},\ldots, e_{i-1}]\}_{i=1}^n,$$
so that $|K|=2n$. Also, for every $B\in K$ as above: $V_B=\emptyset$,
$|\det B| = 1$ and $\lambda_1^B(x) =
B\mbox{diag}\{|A(x)Be_j|^{-1}\}_{i=1}^n$, i.e. $\lambda_1^B(x)$ differs
from $\lambda(x)$ only by the order and sign of its columns. Hence:
$$\forall B\in K\qquad QW(\nabla u(x) \lambda_1^B(x)) = QW(\nabla u(x)
\lambda(x)), \quad W(\nabla u(x) \lambda_1^B(x)) = W(\nabla u(x) \lambda(x))$$
and so:
$$I_Q(u) = \sum_{\zeta\in S_1, B\in K_\zeta} \frac{1}{n}\int_\Omega QW(\nabla
u(x) \lambda_1^B(x))~\mbox{d}x = 2\int_\Omega QW(\nabla u(x)
    \lambda(x))~\mathrm{d}x.$$
Likewise: $I(u) = 2\int_\Omega W(\nabla u(x) \lambda(x))~\mathrm{d}x.$
The proof follows now by Corollary \ref{lsc_envelope} and Theorem
\ref{qconvex}, as before.
\end{proof}

Using the integral representation of section \ref{sec3}, we also
arrive at:

\begin{theorem}\label{th3} (Case 3: next-to-nearest-neighbour interactions in $\mathbb{R}^2$.)
Let $\Omega\subset\mathbb{R}^2$ and assume that $\psi(\sqrt{2})=1$ and
$\psi(|\xi|)=0$ for all $|\xi|\geq \sqrt{3}$ and $|\xi|\leq 1$. 
Denote:
\begin{equation*}
\lambda_{\sqrt{2}}(x) = \sqrt{2}B~\mathrm{ diag}
\left\{|A(x)Be_1|^{-1}, |A(x)Be_2|^{-1}\right\}, \qquad B = \left[\begin{array}{cc}
1 & -1 \\  1 & 1 \end{array}\right]. 
\end{equation*}
Then: 
\begin{equation*}
\mathcal{F}(u) = \left\{\begin{array} {ll} {\displaystyle{2\int_\Omega QW(\nabla u(x)
    \lambda_{\sqrt{2}}(x))\mathrm{d}x}} & \mbox{ for } u \in W^{1, 2}(\Omega,
    \mathbb{R}^{2})\\
+\infty & \mbox{ for } u \in L^2\setminus W^{1,2}.\end{array}\right.
\end{equation*}
\end{theorem}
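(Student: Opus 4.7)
The plan is to follow the same three-step strategy used to prove Theorems \ref{Fnear} and \ref{th2}: compute the explicit form of the bounds $I$ and $I_Q$ from Theorem \ref{thmain} for the present choice of $\psi$, then use Corollary \ref{lsc_envelope} together with Theorem \ref{qconvex} to identify the weakly lsc envelope of $I$ with $I_Q$, so that the upper and lower bounds match. The $+\infty$ case is handled by Corollary \ref{Ffinite}.

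First, I would analyze the combinatorial data for $|\xi_0|=\sqrt{2}$ in $\mathbb{R}^2$. The set $S_{\sqrt{2}}$ consists of a single element $\zeta=\{1,1\}$, so $k=2$, and $N_\zeta=\{(\pm 1, \pm 1)\}$ has $4$ elements. The algorithm in Lemma \ref{algo} produces $|K_\zeta|=k|N_\zeta|=8$ matrices $B$, each of which has columns that are signed permutations of $(1,\pm 1)$ and therefore satisfies $|\det B|=2$. A direct inspection of the fundamental domain of $\epsilon B \mathbb{Z}^2$ shows that $|V_B|=1$ for every such $B$ (the single translation being equivalent, as in section \ref{sec3}, to $\epsilon e_1$). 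Substituting into (\ref{Ilimit2}) produces
$$
\sum_{B\in K_\zeta}\frac{1+|V_B|}{(nk)|\det B|} \;=\; 8\cdot\frac{2}{2\cdot 2\cdot 2}\;=\;2.
$$

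Second, I would exploit the column symmetry of the density $W$. Because $W(M)=\sum_j((|Me_j|)-1)^2_+$ depends only on the \emph{set} of column norms of $M$, it is invariant under right multiplication of its argument by any signed permutation matrix; the same holds for $QW$ by (\ref{QW}). Every $B\in K_\zeta$ differs from the reference matrix $B_0=\bigl(\begin{smallmatrix}1&-1\\1&1\end{smallmatrix}\bigr)$ by such a signed permutation, so
$$
W\bigl(\nabla u(x)\lambda^{B}_{\sqrt{2}}(x)\bigr)=W\bigl(\nabla u(x)\lambda_{\sqrt{2}}(x)\bigr),\qquad QW\bigl(\nabla u(x)\lambda^{B}_{\sqrt{2}}(x)\bigr)=QW\bigl(\nabla u(x)\lambda_{\sqrt{2}}(x)\bigr),
$$
for all $B\in K_\zeta$. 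Combining with the count above yields
$$
I_Q(u)=2\int_\Omega QW\bigl(\nabla u(x)\lambda_{\sqrt{2}}(x)\bigr)\,\mathrm{d}x,\qquad I(u)=2\int_\Omega W\bigl(\nabla u(x)\lambda_{\sqrt{2}}(x)\bigr)\,\mathrm{d}x.
$$

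Finally, Theorem \ref{thmain} gives $\mathcal{F}(u)\ge I_Q(u)$. For the reverse inequality, Corollary \ref{lsc_envelope} provides $\mathcal{F}(u)\le G_0(I)(u)$, and Theorem \ref{qconvex} applied to the integrand $f(x,M)=W(M\lambda_{\sqrt{2}}(x))$ (which satisfies the required growth bounds and whose quasiconvexification in $M$ is $QW(M\lambda_{\sqrt{2}}(x))$, since the quasiconvexification is preserved by right multiplication by the invertible matrix $\lambda_{\sqrt{2}}(x)$) identifies $G_0(I)(u)$ with $I_Q(u)$. This yields equality on $W^{1,2}(\Omega,\mathbb{R}^2)$, while Corollary \ref{Ffinite} forces $\mathcal{F}=+\infty$ outside of this space. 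The main technical point is the combinatorial bookkeeping leading to the factor $2$; the symmetry argument then collapses the multi-lattice sum in Theorem \ref{thmain} to a single integral, after which the rest of the argument parallels Theorem \ref{Fnear} verbatim.
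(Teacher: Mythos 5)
Your proposal is correct and follows essentially the same route the paper intends: the paper gives no separate proof of Theorem \ref{th3}, merely invoking the integral representation of section \ref{sec3} and the argument of Theorems \ref{Fnear}--\ref{th2}, and your combinatorial bookkeeping ($|K_\zeta|=8$, $|\det B|=2$, $|V_B|=1$, $nk=4$, yielding the factor $2$) together with the signed-permutation invariance of $W$ and $QW$ correctly reproduces the coefficient that the Case~3 representation gives directly. The final step via Corollary \ref{lsc_envelope} and Theorem \ref{qconvex} matches the paper's treatment of the nearest-neighbour cases verbatim.
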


The functionals $\mathcal{F}$ obtained in Theorems \ref{Fnear}, \ref{th2} and \ref{th3},
measure the deficit of a deformation $u$ from being an
orientation preserving (modulo compressive maps, due to the
quasiconvexification of the energy density $W$) realisation of the
metric $\bar G =
(\lambda^{-1})^{T}(\lambda^{-1})$. In the next section we compare
these functionals with the non-Euclidean energy $\mathcal{E}$. 

\section{Comparison of the variational limits and the energy $E$}\label{sec6}

In this section we assume that $\Omega$ is an open bounded subset of
$\mathbb{R}^2$. Our scope is to compare the following integral functionals: 
\begin{equation*}
\mathcal{F}_1(u) = \int_\Omega QW(\nabla u \lambda(x)) ~\mbox{d}x,\quad
\mathcal{F}_{\sqrt{2}}(u) = \int_\Omega QW(\nabla u  \lambda_{\sqrt{2}}(x))~\mbox{d}x,\quad
\mathcal{E}(u) = \int_\Omega \overline{W}(\nabla u A(x)^{-1})~\mbox{d}x,
\end{equation*}
where the stored energy density $\overline{W}: \mathbb{R}^{2 \times 2}
\rightarrow \overline{\mathbb{R}}_+$ satisfies (\ref{assu}).

\begin{lemma}\label{uno}
Assume that $\min \mathcal{E}(u) = 0$, so that the prestrain metric $G$ is
realisable by a smooth $u: \Omega \to \mathbb{R}^{2}$ with $(\nabla
u)^T\nabla u = G$. Then: $\mathcal{F}_1(u) = 0$.
\end{lemma}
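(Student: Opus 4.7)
The plan is to use the explicit formula for the quasiconvexification of $W$ from Lemma \ref{quasiconv. W}, namely
\[
QW(M) = \sum_{i: |Me_i|>1} (|Me_i|-1)^2,
\]
which shows that $QW(M)=0$ whenever $|Me_i|\le 1$ for every $i$. So it suffices to check this inequality (in fact equality to $1$) for the matrix $M(x)=\nabla u(x)\lambda(x)$ at almost every $x\in\Omega$.

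First I would compute $M(x)e_i$ directly. Since $\lambda(x)=\mathrm{diag}\{|A(x)e_1|^{-1},|A(x)e_2|^{-1}\}$, we have
\[
M(x)e_i \;=\; \nabla u(x)\,\lambda(x)\,e_i \;=\; \frac{\nabla u(x) e_i}{|A(x)e_i|},
\]
so that $|M(x)e_i| = |\nabla u(x)e_i|/|A(x)e_i|$.

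Next I would use the hypothesis $\mathcal{E}(u)=0$. By the discussion following \eqref{assu}, this is equivalent to $(\nabla u)^T\nabla u = A^TA$ almost everywhere in $\Omega$. Pairing with $e_i$ gives
\[
|\nabla u(x)e_i|^2 \;=\; \langle (\nabla u)^T\nabla u\, e_i, e_i\rangle \;=\; \langle A^TA\, e_i,e_i\rangle \;=\; |A(x)e_i|^2,
\]
hence $|M(x)e_i|=1$ for $i=1,2$ and a.e. $x\in\Omega$.

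Finally, since the index set $\{i : |M(x)e_i|>1\}$ is empty, the formula for $QW$ from Lemma \ref{quasiconv. W} gives $QW(\nabla u(x)\lambda(x))=0$ a.e., and therefore $\mathcal{F}_1(u)=0$. There is no real obstacle here; the only subtlety is recognizing that the relation $(\nabla u)^T\nabla u=A^TA$ (rather than something involving rotations) forces equality $|M e_i|=1$ exactly, which lands the matrix precisely on the boundary of the zero-set of $QW$.
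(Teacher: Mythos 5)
Your proof is correct and follows essentially the same route as the paper: both reduce the claim to showing that the columns of $\nabla u(x)\lambda(x)$ have unit length and then invoke the formula for $QW$ from Lemma \ref{quasiconv. W}. The only cosmetic difference is that the paper obtains $|A(x)e_i|=|\nabla u(x)e_i|$ via the polar decomposition $A=R\nabla u$, whereas you get it directly by pairing $(\nabla u)^T\nabla u=A^TA$ with $e_i$ — a slightly more elementary derivation of the same identity.
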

\begin{proof}
Since $A=\sqrt{G}=\sqrt{(\nabla u)^T\nabla u}$, it follows that
$A=R\nabla u$, for some rotation field $R:\Omega\to SO(2)$.
Hence, $|A(x)e_i| = |\nabla u(x) e_i|$, and so both columns of the matrix:
$$\nabla u(x) \lambda(x) = 
\left[\frac{\nabla u(x) e_1}{|\nabla u(x) e_1|}, ~\frac{\nabla u(x) e_2}{|\nabla u(x) e_2|}\right]$$
have length $1$. The claim follows now by Lemma \ref{quasiconv. W}.
\end{proof}

The following example shows that $G$ may be realisable, as in Lemma
\ref{uno}, but the metric $\bar G={\lambda^{-1,T}}\lambda^{-1}$ is still
not realisable. The vanishing of the infimum of the derived
energy $\mathcal{F}_1$ is hence due to the quasiconvexification effect
in the energy density. 

\begin{example} 
Let $g:\mathbb{R}\to(0,+\infty)$ be a smooth function. Consider:
$$ G(x_1, x_2) = \left[\begin{array}{cc} 1/2 &  1 \\ 
1 & g(x_1)  \end{array}\right], \quad 
\bar{G}(x_1, x_2) = \mbox{diag}\{|A(x_1)e_1|^2, |A(x_1)e_2|^2\} = \left[\begin{array}{cc} 1/2 & 0  \\ 
0 & g(x_1)  \end{array}\right],$$
where the formula for $\bar{G}$ follows from the fact that $|A(x)e_i|^2 =
\langle e_i, A(x)^2 e_i\rangle = \langle e_i, G(x) e_i\rangle$.
We now want to assign $g$ so that the Gaussian cuvatures $\kappa$ and
$\kappa_1$ of $G$ and $\bar{G}$, satisfy:
\begin{equation}\label{K}
\kappa = 0, \qquad \kappa_1\neq 0.
\end{equation}
By a direct calculation, we see that:
\begin{equation*}
\begin{split}
& \kappa_1 = \frac{1}{\sqrt{g}} \left(\frac{g'}{\sqrt{g}}\right)' =
\frac{-2g g'' + (g')^2}{2g^2} \\ 
& (\frac{g}{2} - 1)^2 \kappa = -\frac{1}{2}g'' (\frac{g}{2} - 1) +
\frac{1}{8}(g')^2 = \frac{1}{2}g'' + \frac{g^2}{4}\kappa_1.
\end{split}
\end{equation*}
Hence, (\ref{K}) is equivalent to:
\begin{equation}\label{K2}
g>2, \qquad g''\neq 0, \qquad g''=\frac{(g')^2}{2(g-2)}.
\end{equation}
Clearly, the second order ODE above has a solution on a
sufficiently small interval
$(-\epsilon, \epsilon)$, for any assigned initial data $g(0) = g_0>2$ and
$g'(0) = g_1>0$. Also, this local
solution satisfies all three conditions in (\ref{K2}) by continuity,
if $\epsilon>0$ is small enough.

This completes the example. By rescaling $\tilde g(x_1) = g(\epsilon
x_1)$, we may obtain the metric
$G$ on $\Omega = (0,1)^2$, with the desired properties.
\end{example}

The next example shows that the induced metric $\bar G$ can be
realisable even when $G$ is not. In this case, one trivially has:
$\inf \mathcal{E}(u)>0$ while $\min \mathcal{F}_1(u) = 0$.

\begin{example} 
Let $w:(0,1)^2\to (0,\frac{\pi}{2})$ be a smooth function such that
$w_{x_1, x_2}\neq 0$, and define:
$$ G(x) = \left[\begin{array}{cc} 1 &  \cos w(x) \\ 
\cos w(x) & 1  \end{array}\right],\qquad \bar{G}(x) =
\mbox{diag}\{|A(x)e_1|^2, |A(x)e_2|^2\} = \mbox{Id}_2.$$
Clearly, $\kappa_1\neq 0$. We now compute the Gaussian curvature of $G$:
\begin{equation*}
\kappa = \frac{1}{\sin^4w} \left((-(\cos w) w_{x_1} w_{x_2} - (\sin w)
  w_{x_1, x_2})\sin^2w + (\sin^2w) w_{x_2} (\cos w) w_{x_1}\right) =
-\frac{w_{x_1, x_2}}{\sin w}\neq 0.
\end{equation*}
\end{example}

\medskip

The following simple observation establishes the relation between $\mathcal{F}_1$ and $\mathcal{F}_{\sqrt{2}}$.

\begin{lemma}
Let $\Omega=B(0,1)$. Then, we have:
$$ \forall u \in W^{1, 2}(\Omega, \mathbb{R}^{2})\qquad
\mathcal{F}_{\sqrt{2}}(u) = \overline{\mathcal{F}}_{1} (\sqrt{2}u \circ R), $$
where $\overline{\mathcal{F}}_1$ is defined with respect to the metric $G_1$ in:
$$G_1(x) = R^T G(Rx) R, \qquad R=\frac{1}{\sqrt{2}} B.$$
\end{lemma}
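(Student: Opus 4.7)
The plan is a direct change of variables, using that $R = \frac{1}{\sqrt{2}} B$ is a rotation in $SO(2)$ (easily checked since $B^TB = 2\,\mathrm{Id}$ and $\det B = 2$). I will show the stronger pointwise identity
\begin{equation*}
\nabla v(x) \,\lambda_{1,G_1}(x) \;=\; \bigl(\nabla u\,\lambda_{\sqrt{2}}\bigr)(Rx),
\qquad v(x) := \sqrt{2}\, u(Rx),
\end{equation*}
where $\lambda_{1,G_1}$ denotes the diagonal matrix field from Theorem~\ref{Fnear} built from the metric $G_1$. The claimed identity $\mathcal{F}_{\sqrt{2}}(u) = \overline{\mathcal{F}}_1(v)$ then follows by substituting into $\overline{\mathcal{F}}_1(v) = \int_\Omega QW(\nabla v\,\lambda_{1,G_1})\,\mathrm{d}x$, applying the change of variables $y = Rx$ (with Jacobian $|\det R| = 1$), and using that $R(B(0,1)) = B(0,1)$ because $R \in SO(2)$ and $\Omega$ is the unit ball.

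The first step is to identify $A_1 := \sqrt{G_1}$ in terms of $A$. Since $R^T A(Rx) R$ is symmetric positive definite and satisfies $(R^T A(Rx) R)^2 = R^T G(Rx) R = G_1(x)$, the uniqueness of the symmetric positive square root yields $A_1(x) = R^T A(Rx)\, R$. Consequently, using $|R^T w| = |w|$ for any vector $w$ and the relation $Re_i = \frac{1}{\sqrt{2}} Be_i$:
\begin{equation*}
|A_1(x) e_i| \;=\; |A(Rx)\,Re_i| \;=\; \tfrac{1}{\sqrt{2}}\, |A(Rx)\,Be_i|,
\end{equation*}
so that $\lambda_{1,G_1}(x) = \mathrm{diag}\{|A_1(x) e_i|^{-1}\}_{i=1}^2 = \sqrt{2}\,\mathrm{diag}\{|A(Rx) Be_i|^{-1}\}_{i=1}^2$.

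Next, $\nabla v(x) = \sqrt{2}\, \nabla u(Rx)\, R$ by the chain rule. Multiplying,
\begin{equation*}
\nabla v(x)\, \lambda_{1,G_1}(x) \;=\; 2\,\nabla u(Rx)\, R\, \mathrm{diag}\{|A(Rx) Be_i|^{-1}\} \;=\; \nabla u(Rx)\,\bigl(\sqrt{2} B\bigr)\,\mathrm{diag}\{|A(Rx) Be_i|^{-1}\},
\end{equation*}
where I used $2R = \sqrt{2}\,B$. Comparing with the formula
\begin{equation*}
\lambda_{\sqrt{2}}(y) \;=\; \sqrt{2}\, B\, \mathrm{diag}\{|A(y) Be_1|^{-1},\,|A(y) Be_2|^{-1}\},
\end{equation*}
one sees that $\nabla v(x)\, \lambda_{1,G_1}(x) = \nabla u(Rx)\, \lambda_{\sqrt{2}}(Rx)$, which is the pointwise identity claimed. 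Integrating $QW$ of both sides and changing variables $y=Rx$ concludes the proof. The only non-routine point is the reduction $A_1 = R^T A(R\cdot) R$, which is the main conceptual (though short) obstacle; the rest is bookkeeping of constants and the rotational symmetry of the unit ball.
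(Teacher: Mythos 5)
Your proof is correct and follows essentially the same route as the paper: a change of variables under the rotation $R=\tfrac{1}{\sqrt 2}B$ together with the identities $\nabla(\sqrt2\,u\circ R)=\sqrt2\,\nabla u(R\cdot)R$ and $2R=\sqrt2\,B$. The only cosmetic difference is that you identify $\sqrt{G_1}=R^TA(R\cdot)R$ explicitly, whereas the paper reads off $|\sqrt{G_1(x)}e_i|^2=\langle e_i,G_1(x)e_i\rangle=|A(Rx)Re_i|^2$ directly from the quadratic form; both yield the same diagonal factor.
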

\begin{proof}
Note first that $G_1$ is the pull-back of the metric $G$ under the
rotation $x\mapsto Rx$. Thus:
\begin{equation*}
\begin{split}
\mathcal{F}_{\sqrt{2}}(u) &= \int_\Omega QW(\nabla u (x)
\lambda_{\sqrt{2}}(x))~\mbox{d}x \\ &
= \int_{\Omega}QW\Big(\sqrt{2} \nabla u (Ry) \sqrt{2} R~
\mbox{diag}\{|A(Ry)Be_1|^{-1},  |A(Ry)Be_2|^{-1}\}\Big) ~\mbox{d}y \\ & = 
\int_{\Omega}QW\Big(\nabla (\sqrt{2} u \circ R)(y) ~ \mbox{diag}\{|A(Ry)Re_1|^{-1},
|A(Ry)Re_2|^{-1}\}\Big) ~\mbox{d}y 
\\ & = \int_{\Omega}QW\Big(\nabla (\sqrt{2} u \circ R)(y)
\bar\lambda(y) \Big) ~\mbox{d}y = \overline{\mathcal{F}}_{1} (\sqrt{2}u \circ R),
\end{split}
\end{equation*}
because $|\sqrt{G_1(x)}e_i| = |A(Rx)Re_i|$, which implies:
$\bar \lambda(x) = \mbox{diag}\{|A(Rx)Re_1|^{-1},  |A(Rx)Re_2|^{-1}\}$.
\end{proof}

Finally, observe also that if $\mathcal{F}(u)=\mathcal{F}_1(u)=0$, then the length of columns in the
matrix $\nabla u(x) \lambda_{\sqrt{2}}(x)$ equals $\sqrt{2}$. Hence
$\mathcal{F}_{\sqrt{2}}(u)\neq 0$.

\section{Appendix}\label{secap}

\subsection{$\Gamma-$convergence} 
We now recall the definition and some basic
properties of $\Gamma$-convergence, that will be needed in the sequel.

\begin{definition} \label{def_gamma}
Let $\{I_\epsilon\}, I: X \rightarrow \overline{\mathbb{R}} = \mathbb{R} \cup \{-\infty, \infty\}$ 
be functionals on  a metric space $X$.
We say that $I_\epsilon$ $\Gamma$-converge to $I$ (as $\epsilon\to 0$), iff:

(i)  For every $\{u_\epsilon\}, u \in X$ with $u_\epsilon\rightarrow u$, we have:
$I(u) \leq \liminf_{\epsilon\to 0} I_\epsilon(u_\epsilon).$

(ii) For every $u \in X$, there exists a sequence $u_\epsilon\rightarrow u$ such that
$ I(u) = \lim_{\epsilon\to 0}I_\epsilon(u_\epsilon)$
\end{definition} 

\begin{theorem} \cite[Chapter 7]{Braides}\label{thcom}
Let $I_\epsilon, I$ be as in Definition \ref{def_gamma} and assume
that there exists a compact set $K \subset X$ satisfying:
$$ \inf_X I_\epsilon = \inf_KI_\epsilon \qquad \forall \epsilon.$$
Then: $ \lim_{\epsilon\to 0} (\inf_X I_\epsilon) = \min_X I$, and 
moreover if $\{u_\epsilon\}$ is a converging sequence such that: 
$$ \lim_{\epsilon\to 0} I_\epsilon(u_\epsilon) = \lim_{\epsilon\to 0} (\inf_X I_\epsilon), $$
then $u=\lim u_\epsilon$ is a minimum of $I$, i.e.: $I(u) = \min_X I$.
\end{theorem}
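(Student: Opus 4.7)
The plan is to prove the two assertions by combining the two defining properties of $\Gamma$-convergence (the liminf inequality and the existence of recovery sequences) with the compactness hypothesis on $K$. The statement has two parts: (a) convergence of the infima, $\lim_{\epsilon\to 0}\inf_X I_\epsilon = \min_X I$, including the fact that the minimum of $I$ is attained; and (b) every limit of an asymptotically minimizing sequence is a minimizer of $I$. I would structure the argument as a lower bound, an upper bound, and a short ``moreover'' paragraph.

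For the lower bound $\liminf_{\epsilon\to 0}\inf_X I_\epsilon \ge \min_X I$, I would for each $\epsilon$ pick $u_\epsilon\in K$ with $I_\epsilon(u_\epsilon)\le \inf_K I_\epsilon + \epsilon = \inf_X I_\epsilon + \epsilon$, which is possible by the hypothesis $\inf_XI_\epsilon=\inf_KI_\epsilon$. Since $K$ is compact, a subsequence converges to some $u\in K$. The $\Gamma$-liminf inequality (Definition~\ref{def_gamma}(i)) then gives
\begin{equation*}
I(u)\le \liminf_{\epsilon\to 0}I_\epsilon(u_\epsilon)=\liminf_{\epsilon\to 0}\inf_X I_\epsilon.
\end{equation*}
In particular $\inf_X I\le I(u)\le\liminf_{\epsilon\to 0}\inf_X I_\epsilon$.

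For the upper bound $\limsup_{\epsilon\to 0}\inf_X I_\epsilon\le \inf_X I$, I would take any $v\in X$ and use Definition~\ref{def_gamma}(ii) to produce a recovery sequence $v_\epsilon\to v$ with $I_\epsilon(v_\epsilon)\to I(v)$. Since $\inf_X I_\epsilon\le I_\epsilon(v_\epsilon)$, passing to the limit yields $\limsup_{\epsilon\to 0}\inf_X I_\epsilon\le I(v)$. Taking the infimum over $v\in X$ gives the claim. Combined with the lower bound, $\lim_{\epsilon\to 0}\inf_X I_\epsilon=\inf_X I$, and revisiting the $u$ produced above, $I(u)\le \inf_X I$, which forces $I(u)=\inf_X I$, so the infimum is attained and equals $\min_X I$.

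Finally, for the ``moreover'' part, suppose $u_\epsilon\to u$ with $I_\epsilon(u_\epsilon)\to \lim_{\epsilon\to 0}\inf_X I_\epsilon=\min_X I$. Applying the $\Gamma$-liminf inequality once more,
\begin{equation*}
I(u)\le \liminf_{\epsilon\to 0}I_\epsilon(u_\epsilon)=\min_X I,
\end{equation*}
so $u$ attains the minimum. The only subtle step is the first one: one must use the compactness of $K$ together with the hypothesis that all $I_\epsilon$ are minimized on $K$ in order to extract a convergent almost-minimizing subsequence, without which the $\Gamma$-liminf inequality has no handle to transfer the infima into the limit. The rest is a direct bookkeeping of the two $\Gamma$-convergence properties.
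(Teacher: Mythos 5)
Your proof is correct; it is the standard argument for the fundamental theorem of $\Gamma$-convergence, and the paper gives no proof of its own here --- it simply cites the result from \cite[Chapter 7]{Braides}, where essentially this same argument (almost-minimizers extracted from the compact set $K$, the liminf inequality for the lower bound on the infima, recovery sequences for the upper bound) appears. The only step worth making explicit is that the liminf inequality of Definition \ref{def_gamma}(i) is applied along a convergent \emph{subsequence} of the almost-minimizers, which is legitimate because the liminf over the full sequence is bounded above by the liminf over any subsequence.
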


\begin{theorem}\label{p} \cite[Chapter 7]{Braides}
Let $\Omega$ be an open subset of $\mathbb{R}^n$. Any sequence of functionals
$I_\epsilon:L^2(\Omega,\mathbb{R}^n)\rightarrow \overline{\mathbb{R}}$ has a
subsequence which $\Gamma$-converges to some lower semicontinuous
functional $I:L^2(\Omega,\mathbb{R}^n)\rightarrow
\overline{\mathbb{R}}$.  Moreover, if every subsequence of $\{I_\epsilon\}$   
has a further subsequence that $\Gamma$-converges to (the same limit)
$I$, then the whole sequence $I_\epsilon$ $\Gamma$-converges to $I$.
\end{theorem}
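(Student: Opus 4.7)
The statement has two parts: sequential compactness of $\Gamma$-convergence, and a Urysohn-type characterization of convergence of the full sequence. I would handle them separately, relying on the classical framework of $\Gamma$-convergence developed by De Giorgi.

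For the compactness assertion, the plan is a diagonal extraction argument that exploits the separability of $L^2(\Omega,\mathbb{R}^n)$. Introduce the $\Gamma$-lower and $\Gamma$-upper limits of the sequence $\{I_\epsilon\}$,
\begin{equation*}
I'(u) = \inf\big\{\liminf_{\epsilon\to 0} I_\epsilon(u_\epsilon) :\, u_\epsilon \to u \mbox{ in } L^2\big\}, \quad I''(u) = \inf\big\{\limsup_{\epsilon\to 0} I_\epsilon(u_\epsilon) :\, u_\epsilon \to u \mbox{ in } L^2\big\},
\end{equation*}
and observe that both are sequentially lower semicontinuous (an easy diagonal argument), that $I' \leq I''$ in $\overline{\mathbb{R}}$, and that $\Gamma$-convergence of $\{I_\epsilon\}$ to some $I$ is equivalent to $I' = I'' = I$. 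Choose now a countable dense subset $\{v_k\} \subset L^2(\Omega,\mathbb{R}^n)$ and apply a Cantor diagonal procedure to extract a subsequence along which $I_\epsilon(v_k)$ converges in $\overline{\mathbb{R}}$ for every $k$. The candidate $\Gamma$-limit $I$ is then defined as the lower semicontinuous envelope on $L^2$ of this pointwise limit on $\{v_k\}$. The key step is to verify that, along the diagonal subsequence, $I' = I''$ and that both coincide with $I$; this follows from density of $\{v_k\}$ together with the lower semicontinuity of $I'$ and a standard argument approximating an arbitrary $u \in L^2$ by a suitable sequence $v_{k_n}$. An equivalent and arguably more conceptual route is to identify $\Gamma$-convergence in $L^2$ with Kuratowski convergence of the epigraphs inside $L^2 \times \mathbb{R}$; Kuratowski convergence of closed subsets of a separable metric space is compact and metrizable, and Part 1 then follows at once.

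For Part 2, once $\Gamma$-convergence on $L^2(\Omega,\mathbb{R}^n)$ is realized as the convergence of a metrizable (or at least first countable) topology on the space of lsc functionals, I invoke the standard Urysohn principle in metric spaces: if every subsequence of $\{x_n\}$ admits a further subsequence converging to $x$, then the whole sequence converges to $x$. Applying this to $x_n = I_{\epsilon_n}$ in the epi-topology produces the claim. One may alternatively argue directly: if the whole sequence did not $\Gamma$-converge to $I$, then $I' \neq I''$ or one of them differs from $I$, and by definition one could extract a subsequence realizing the discrepancy; but this subsequence, by hypothesis, has a further subsequence $\Gamma$-converging to $I$, yielding a contradiction with the limsup/liminf structure of $I', I''$.

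\textbf{Main obstacle.} The substantive content is the compactness part. The delicate point is rigorously producing a single subsequence along which $I'$ and $I''$ coincide everywhere on $L^2(\Omega,\mathbb{R}^n)$, not merely on the countable dense set; this is where separability is used decisively, and where care is needed because the values lie in $\overline{\mathbb{R}}$ and the candidate $\Gamma$-limit must be forced to be lsc. Once the compactness/metrizability framework is in place (see Dal Maso, or Braides Chapter 7), Part 2 is a soft topological consequence.
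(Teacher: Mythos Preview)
The paper does not prove this theorem; it is stated in the Appendix with a citation to \cite[Chapter 7]{Braides} and no argument is given. So there is nothing to compare your proposal against in the paper itself.

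That said, your plan contains one genuine gap. Your first route to compactness extracts a subsequence along which the \emph{pointwise values} $I_\epsilon(v_k)$ converge on a countable dense set $\{v_k\}\subset L^2$, and then declares the candidate $\Gamma$-limit to be the lsc envelope of these limits. This does not work: $\Gamma$-limits are governed by infima over neighbourhoods, not by pointwise values, and convergence of $I_\epsilon(v_k)$ for every $k$ does not force $I'=I''$. A one-line counterexample on $X=\mathbb{R}$: set $I_n(x)=-1$ if $x=1/n$ and $I_n(x)=0$ otherwise. For every fixed $v_k$ one has $\lim_n I_n(v_k)=0$, so your candidate is identically $0$; but the sequence $\Gamma$-converges, and the $\Gamma$-limit equals $-1$ at $0$ (witnessed by $u_n=1/n$). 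The standard diagonal argument in Braides/Dal Maso instead fixes a countable \emph{base of open sets} $\{U_j\}$ and extracts so that $\inf_{U_j} I_\epsilon$ converges for every $j$; the $\Gamma$-limit is then built from these local infima, which is exactly what captures the neighbourhood structure.

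Your alternative route via Kuratowski convergence of epigraphs in the separable metric space $L^2\times\mathbb{R}$ is correct and is the conceptual argument behind the result. Your treatment of Part~2 (Urysohn principle, or the direct $I'/I''$ contradiction) is also fine.
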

 
\subsection{Convexity and quasiconvexity}

In this section $f:\mathbb{R}^{m\times n}\rightarrow
{\mathbb{R}}$ is a function assumed to be Borel measurable, locally bounded and bounded from below. 
Recall that the convex and quasiconvex envelopes of $f$, i.e. 
$Cf, Qf:\mathbb{R}^{m\times n}\rightarrow {\mathbb{R}}$ are defined by:
\begin{equation*}
\begin{split}
Cf(M) & =  \textrm{sup} \left\{g(M); ~~g:\mathbb{R}^{m \times n} \rightarrow 
\mathbb{R},~~ g \textrm{ convex,  } g \leq f\right\}, \\
Qf(M) & = \textrm{sup} \left\{g(M); ~~g:\mathbb{R}^{m \times n} \rightarrow 
\mathbb{R},~~ g \textrm{ quasiconvex,  } g \leq f\right\}. 
\end{split}
\end{equation*}
We say that $f$ is quasiconvex, if: 
\begin{eqnarray*}
f(M) \leq \fint_{D} f(M + \nabla \phi
(x))~\mbox{d}x \qquad \forall M \in \mathbb{R}^{m \times n}\quad \forall 
\phi \in W^{1, \infty}_0 (D, \mathbb{R}^{m}),
\end{eqnarray*} 
on every open bounded set $D\subset\mathbb{R}^{n}$.

\begin{theorem}\label{nm1}\cite[Chapter 6]{Dac}
\begin{itemize}
\item[(i)] When $m=1$ or $n=1$ then $f$ is quasiconvex if and only if $f$ is convex.
\item[(ii)]  For any open bounded $D\subset \mathbb{R}^n$ there holds:
$$Qf(M) = \inf\left\{\fint_D f(M+\nabla \phi(x))~\mathrm{d}x; ~ \phi\in W_0^{1,\infty}(D,\mathbb{R}^m)\right\}.$$
\item[(iii)] Assume that, for some $n_1+n_2 = n$ we have:
$$f(M) = f_1(M_{n_1}) + f_2(M_{n_2}) \qquad \forall M\in \mathbb{R}^{m\times n},$$
where $M_{n_1}$ stands for the principal minor of $M$ consisting of
its first $n_1$ columns, while $M_{n_2}$ is the minor of $M$
consisting of its $n_2$ last columns. Assume that $f_1, f_2$ are Borel
measurable and bounded from below. Then:
$$Cf= Cf_1 + Cf_2, \qquad Qf= Qf_1 + Qf_2$$
\end{itemize}
\end{theorem}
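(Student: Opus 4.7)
\medskip

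The proof of Theorem \ref{nm1} is classical (due to Dacorogna); here is how I would organize the three parts.

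For part (i), the plan is to use the standard chain
\begin{equation*}
f\ \text{quasiconvex}\ \Longrightarrow\ f\ \text{rank-one convex}\ \Longleftrightarrow\ f\ \text{convex (when $m=1$ or $n=1$)}.
\end{equation*}
The nontrivial implication is the first: given $A,B\in\mathbb{R}^{m\times n}$ with $B-A=a\otimes\nu$, a rank-one direction, and $\lambda\in(0,1)$, on a cube $D$ with one face normal to $\nu$ I would build a sequence $\phi_k\in W_0^{1,\infty}(D,\mathbb{R}^m)$ whose gradient oscillates between $A-M$ and $B-M$ in proportions $\lambda$ and $1-\lambda$ (with $M=\lambda A+(1-\lambda)B$), using small transition layers near $\partial D$ so that $\phi_k=0$ on the boundary. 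Passing to the limit in $f(M)\leq \fint_D f(M+\nabla\phi_k)$ yields $f(M)\leq \lambda f(A)+(1-\lambda)f(B)$, i.e.\ rank-one convexity along $B-A$. When either $m=1$ or $n=1$, every matrix in $\mathbb{R}^{m\times n}$ has rank at most one, so every segment is a rank-one segment, and rank-one convexity becomes convexity. The converse (convex $\Rightarrow$ quasiconvex) is immediate by Jensen's inequality applied to $\fint_D f(M+\nabla\phi)$.

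For part (ii), following the standard Dacorogna approach, I would set
\begin{equation*}
\tilde I(M) = \inf\Big\{\fint_D f(M+\nabla\phi(x))~\mathrm{d}x; ~ \phi\in W_0^{1,\infty}(D,\mathbb{R}^m)\Big\}
\end{equation*}
and prove $\tilde I=Qf$ in four steps. First, by scaling and translation (affine reparametrizations of test functions) the quantity $\tilde I(M)$ is independent of the open bounded $D$. Second, choosing $\phi\equiv 0$ gives $\tilde I\leq f$. Third, I would show that $\tilde I$ is quasiconvex: given $\psi\in W_0^{1,\infty}(E,\mathbb{R}^m)$, partition $E$ into small cubes on each of which $\nabla\psi$ is nearly constant equal to some $N_j$, glue in near-optimal correctors for $\tilde I(M+N_j)$ on each cube (rescaled via part (ii)'s domain independence), and estimate the resulting error to conclude $\tilde I(M)\leq \fint_E \tilde I(M+\nabla\psi)\leq \fint_E f(M+\nabla\psi)$. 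Since $\tilde I$ is quasiconvex and $\leq f$, one gets $\tilde I\leq Qf$ by the very definition of the envelope. Conversely, for any quasiconvex $g\leq f$ and any admissible $\phi$, quasiconvexity of $g$ yields $g(M)\leq \fint_D g(M+\nabla\phi)\leq \fint_D f(M+\nabla\phi)$; taking $\inf_\phi$ and then $\sup_g$ gives $Qf\leq \tilde I$.

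For part (iii), both envelopes split because $f_1(M_{n_1})$ and $f_2(M_{n_2})$ depend on disjoint blocks of columns. For the convex case, one direction: $Cf_1(M_{n_1})+Cf_2(M_{n_2})$ is convex in $M$ and dominated by $f$, so $\leq Cf$. For the reverse, I would use the Carath\'eodory representation $Cf(M)=\inf\{\sum_i\lambda_i f(M_i);~\sum_i\lambda_i M_i=M\}$; substituting $f=f_1+f_2$ and applying Jensen separately to the $n_1$- and $n_2$-blocks gives $\sum_i\lambda_i f(M_i)\geq Cf_1(M_{n_1})+Cf_2(M_{n_2})$, so $Cf\geq Cf_1+Cf_2$. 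For the quasiconvex case, $Qf_1+Qf_2$ is quasiconvex on $\mathbb{R}^{m\times n}$ (a quasiconvex function of $n_1$ columns is still quasiconvex as a function of the full matrix, by Fubini on product domains) and dominated by $f$, so $Qf_1+Qf_2\leq Qf$. For the reverse, I would work on a product domain $D=D_1\times D_2$ using part (ii); for any $\phi\in W_0^{1,\infty}(D,\mathbb{R}^m)$, Fubini gives
\begin{equation*}
\fint_D f(M+\nabla\phi) = \fint_{D_2}\Big(\fint_{D_1} f_1(M_{n_1}+\nabla_{x_1}\phi(\cdot,x_2))\mathrm{d}x_1\Big)\mathrm{d}x_2 + (\text{symmetric}),
\end{equation*}
and for a.e.\ $x_2$ the slice $\phi(\cdot,x_2)$ lies in $W_0^{1,\infty}(D_1,\mathbb{R}^m)$, so quasiconvexity of $Qf_1$ bounds the inner average by $Qf_1(M_{n_1})$; symmetrically for $f_2$. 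Taking $\inf_\phi$ gives $Qf\geq Qf_1+Qf_2$.

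The main technical obstacle is the gluing construction in part (ii) showing $\tilde I$ is quasiconvex: the cube-by-cube substitution of near-optimal correctors must be patched into a single $W_0^{1,\infty}$ function with controlled Lipschitz constant and controlled boundary-layer error. The remaining arguments are Jensen-type manipulations and Fubini applied to the integral representation from (ii); they are essentially routine once the representation formula is in hand.
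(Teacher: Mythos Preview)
The paper does not supply a proof of Theorem~\ref{nm1}: it is stated in the Appendix as a classical result and simply cited from \cite[Chapter 6]{Dac}. There is therefore no ``paper's own proof'' to compare against.

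Your sketch is the standard Dacorogna argument and is essentially correct. A couple of small points worth tightening: in part~(ii), the step showing $\tilde I$ is quasiconvex usually proceeds by a Vitali covering by rescaled copies of a fixed cube rather than by approximating $\nabla\psi$ as piecewise constant; the latter phrasing requires an extra density/continuity argument to control the error. In part~(iii), your reverse inequality for $Qf$ is fine, but the phrasing ``quasiconvexity of $Qf_1$ bounds the inner average by $Qf_1(M_{n_1})$'' should read ``from below by'', and what you are really using is $Qf_1\leq f_1$ together with quasiconvexity of $Qf_1$ (or equivalently the formula from~(ii) applied to $f_1$ on $D_1$). These are cosmetic; the logical skeleton matches the treatment in Dacorogna.
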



The following classical results explain the role of convexity and
quasiconvexity in the integrands of the typical integral functionals.

\begin{theorem}\label{convex}\cite{Dac}
Let  $\Omega$ be a bounded open set in $\mathbb{R}^{n}$ and let $f:
\mathbb{R}^{m\times 1} \rightarrow \mathbb{R}$ 
be lower semicontinuous (lsc).
Then the functional:
$$ I(u) = \int_{\Omega}f(u(x))~\mathrm{d}x \qquad \forall u \in L^{2}(\Omega, \mathbb{R}^{m})$$
is sequentially lsc with respect to the weak convergence
in $L^{2}(\Omega, \mathbb{R}^{m})$ if and only if $f$ is convex.
\end{theorem}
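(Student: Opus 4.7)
The plan is to prove the two directions separately, as this is a biconditional statement characterizing convexity via sequential weak lower semicontinuity of the associated integral functional.

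For the ``if'' direction (convexity implies sequential weak lsc), I would proceed by Mazur's lemma. Suppose $u_\epsilon \rightharpoonup u$ weakly in $L^{2}(\Omega, \mathbb{R}^{m})$. By Mazur's lemma, for each $N \in \mathbb{N}$ there exist finitely many nonnegative coefficients $\{\lambda_k^N\}_{k \geq N}$ with $\sum_k \lambda_k^N = 1$ such that the convex combinations $v_N = \sum_{k \geq N} \lambda_k^N u_k$ converge strongly to $u$ in $L^{2}(\Omega, \mathbb{R}^{m})$. Passing to a subsequence we may assume $v_N \to u$ almost everywhere. Since $f$ is lsc and convex, for a.e.\ $x$:
\begin{equation*}
f(u(x)) \leq \liminf_{N \to \infty} f(v_N(x)) \leq \liminf_{N \to \infty} \sum_{k \geq N} \lambda_k^N f(u_\epsilon(x)).
\end{equation*}
Because $f$ is locally bounded and bounded from below, we may apply Fatou's lemma to $f + C$ for a suitable constant $C > 0$, obtaining $\int_\Omega f(u) \leq \liminf_N \sum_k \lambda_k^N \int_\Omega f(u_k)$. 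A standard argument comparing weighted averages of a sequence with its limit inferior then yields $\int_\Omega f(u) \leq \liminf_{\epsilon \to 0} \int_\Omega f(u_\epsilon)$, which is precisely the sequential lsc claim.

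For the ``only if'' direction, I would use the classical oscillation construction. Given $M_1, M_2 \in \mathbb{R}^{m}$ and $\lambda \in (0,1)$, let $M = \lambda M_1 + (1-\lambda) M_2$. Partition $\Omega$ (for instance by intersecting with fine periodic lattices) into disjoint sets $A_\epsilon, B_\epsilon$ of measures approximately $\lambda|\Omega|$ and $(1-\lambda)|\Omega|$ respectively, and arranged to become finer as $\epsilon \to 0$. Define $u_\epsilon = M_1 \chi_{A_\epsilon} + M_2 \chi_{B_\epsilon}$. A direct Riemann-type computation shows $u_\epsilon \rightharpoonup M$ weakly in $L^{2}(\Omega, \mathbb{R}^{m})$ with $u$ constant equal to $M$, while
\begin{equation*}
\int_\Omega f(u_\epsilon)\,\mathrm{d}x = |A_\epsilon| f(M_1) + |B_\epsilon| f(M_2) \longrightarrow |\Omega| \bigl(\lambda f(M_1) + (1-\lambda) f(M_2)\bigr).
\end{equation*}
The assumed sequential lsc yields $|\Omega| f(M) \leq |\Omega|(\lambda f(M_1) + (1-\lambda) f(M_2))$, proving convexity of $f$.

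The main obstacle is the technical step in the first direction, namely justifying the passage from the pointwise lsc inequality through Mazur combinations to the final functional estimate: one must carefully handle that $f$ is only bounded below (not necessarily integrable), so Fatou has to be applied to the shifted nonnegative integrand $f+C$, and one needs the elementary fact that the limit inferior of convex combinations $\sum_{k \geq N} \lambda_k^N a_k$ is bounded by $\liminf_k a_k$ for any bounded-below real sequence $\{a_k\}$. Everything else is routine, and the result is of course standard (see \cite{Dac}).
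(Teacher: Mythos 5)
The paper does not actually prove this statement: Theorem \ref{convex} sits in the Appendix as a quoted classical fact, attributed to \cite{Dac}, and is used as a black box in the proof of the lower bound in Theorem \ref{thmain}. So there is no internal proof to compare against; your proposal has to be judged on its own. Both of your directions follow the standard textbook route (Mazur's lemma plus Jensen and Fatou for sufficiency; a two-phase oscillating sequence and the Riemann--Lebesgue lemma for necessity), and the necessity direction as written is fine. Note also that your appeal to ``$f$ locally bounded and bounded from below'' is legitimate here only because those are standing assumptions of the appendix subsection in which the theorem is placed; the theorem statement itself asks only for lsc.

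There is, however, one genuine flaw in the sufficiency argument: the ``elementary fact'' you invoke --- that $\liminf_{N}\sum_{k\geq N}\lambda_k^N a_k \leq \liminf_k a_k$ for any bounded-below sequence and any Mazur weights --- is false. Take $a_k=0$ for $k$ even and $a_k=1$ for $k$ odd, with all the weight $\lambda_k^N$ placed on odd indices: the convex combinations are identically $1$ while $\liminf_k a_k=0$. A convex combination of tail values is only trapped between $\inf_{k\geq N}a_k$ and $\sup_{k\geq N}a_k$, so without further care your chain of inequalities only yields $\int_\Omega f(u)\leq\limsup_k\int_\Omega f(u_k)$. The standard repair is to first extract a subsequence $u_{k_j}$ along which $\int_\Omega f(u_{k_j})$ converges to $\liminf_k\int_\Omega f(u_k)$ (this subsequence still converges weakly to $u$), and only then apply Mazur's lemma to that subsequence; since the numerical sequence now converges, every convex combination of its tails converges to the same limit, and the argument closes. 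With that one-line modification the proof is correct.
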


\begin{theorem}\label{qconvex}\cite[Chapter 9]{Dac}
Let  $\Omega$ be a bounded open set in $\mathbb{R}^{n}$ and let $f:
\Omega\times \mathbb{R}^{m\times n} \rightarrow \mathbb{R}$ be Caratheodory,
and satisfying the uniform growth condition:
\begin{equation}\label{mass}
\exists C_1, C_2>0 \quad \forall x\in\Omega \quad \forall M\in
\mathbb{R}^{m\times n}\qquad  C_1|M|^2 - C_2\leq f(x, M) \leq C_2(1 +
|M|^{2}).
\end{equation}
Assume that the quasiconvexification $Qf$ of $f$ with respect to the
variable $M$, is also a Caratheodory function.
Then for every $u\in W^{1,2}(\Omega, \mathbb{R}^m)$ there exists a
sequence $\{u_\epsilon\}\in u+ W_0^{1,2}(\Omega, \mathbb{R}^m)$ such
that, as $\epsilon\to 0$:
\begin{equation*} 
 u_\epsilon \rightharpoonup u \quad\mbox{weakly in } W^{1,2}
\quad \mbox{ and } \quad \int_{\Omega}f(x, \nabla u_\epsilon(x))~\mathrm{d}x \to \int_{\Omega}Qf(x, \nabla u(x))~\mathrm{d}x.
\end{equation*}
\end{theorem}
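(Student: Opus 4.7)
\bigskip

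\noindent\textbf{Proof plan for Theorem \ref{qconvex}.}

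The plan is to build the recovery sequence $\{u_\epsilon\}$ by a three-step approximation argument: first on affine maps with constant $x$, then on piecewise-affine maps with $x$-dependent $f$, and finally on arbitrary $u\in W^{1,2}(\Omega,\mathbb{R}^m)$ via a diagonal extraction. The growth condition (\ref{mass}) will be used throughout to pass from $L^\infty$ to $L^2$ estimates and to secure weak $W^{1,2}$ compactness.

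First I would settle the \emph{affine, constant-coefficient} case. Fix $M\in\mathbb{R}^{m\times n}$ and $x_0\in\Omega$. By Theorem \ref{nm1}(ii), for every $\delta>0$ there exists $\phi\in W_0^{1,\infty}(\Omega_0,\mathbb{R}^m)$ on a reference cube $\Omega_0$ with $\fint_{\Omega_0} f(x_0, M+\nabla\phi)\,\mathrm{d}y \leq Qf(x_0,M)+\delta$. Rescaling $\phi$ to cubes of side $\epsilon$ and extending periodically, and then multiplying by a cut-off equal to $1$ outside a thin boundary layer, I would produce $\phi_\epsilon\in W_0^{1,\infty}$ with $\nabla\phi_\epsilon$ $\epsilon$-periodic away from the boundary, $\|\nabla\phi_\epsilon\|_{L^\infty}$ uniformly bounded, and $\phi_\epsilon\rightharpoonup 0$ weakly in $W^{1,2}$. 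Using the Riemann-sum convergence of $\epsilon$-periodic integrands and the upper bound in (\ref{mass}) to control the boundary-layer error, this yields $\int_\Omega f(x_0, M+\nabla\phi_\epsilon)\,\mathrm{d}x \to |\Omega|\,Qf(x_0,M)$.

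Next I would pass from the frozen point $x_0$ to the genuine $x$-dependent integrand. Given a piecewise-affine $u$ on a triangulation $\{T_k\}$ of $\Omega$ with $\nabla u\equiv M_k$ on $T_k$, I would apply Scorza-Dragoni to $(f,Qf)$ on each $T_k$: outside a small exceptional set $E_\delta$ of measure $\leq\delta$, both $f(\cdot,\cdot)$ and $Qf(\cdot,\cdot)$ are continuous. Partition each $T_k$ into small sub-cubes on which the centre-point approximations $f(x_k^j,\cdot)$ and $Qf(x_k^j,\cdot)$ agree with the true integrand up to a uniformly small error. On each sub-cube I would paste in the affine-case construction above (with $M=M_k$ and $x_0=x_k^j$), with the $W_0^{1,\infty}$ perturbations extended by zero across sub-cube boundaries so that $u_\epsilon - u\in W_0^{1,2}(\Omega)$ is preserved globally. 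Summing over sub-cubes and letting first $\epsilon\to 0$ and then $\delta\to 0$ gives the recovery for piecewise-affine $u$.

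Finally I would treat a general $u\in W^{1,2}(\Omega,\mathbb{R}^m)$ by approximating it in the $W^{1,2}$-norm by piecewise-affine maps $u^{(j)}$ (standard finite-element interpolation on a fine enough triangulation), invoking continuity of $\int Qf(x,\nabla\cdot)$ in $W^{1,2}$ (which follows from (\ref{mass}) applied to $Qf$, itself satisfying the same growth bounds) and extracting a diagonal sequence $u_\epsilon := u^{(j(\epsilon))}_\epsilon$. The weak convergence $u_\epsilon\rightharpoonup u$ in $W^{1,2}$ is free from the $L^2$-bound on $\nabla u_\epsilon$ coming from (\ref{mass}) together with the energy bound, and the boundary condition $u_\epsilon-u\in W_0^{1,2}$ persists in the diagonal limit because each building-block perturbation was compactly supported. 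The main obstacle I anticipate is the matching along interfaces: making the sub-cube perturbations vanish on sub-cube boundaries forces a thin transition layer, and keeping the integral of $f$ there negligible requires the upper $|M|^2$-growth in (\ref{mass}) together with a careful choice of layer thickness $o(\epsilon)$ relative to $\|\nabla\phi_\epsilon\|_{L^\infty}$; without the uniform growth bound one cannot discard the boundary-layer contribution.
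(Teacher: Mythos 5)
The paper does not prove this statement: it is quoted verbatim from Dacorogna \cite[Chapter 9]{Dac} as a classical relaxation theorem, so there is no internal proof to compare against. Your sketch is the standard argument for that theorem (quasiconvexification formula on a frozen affine piece, periodic rescaling, Scorza--Dragoni plus freezing of coefficients on sub-cubes, then density of piecewise-affine maps and diagonalization), and it is essentially sound; note that the two-sided convergence $\int f \to \int Qf$ does come out of your construction, since Theorem \ref{nm1}(ii) is an \emph{infimum}, so every competitor on a frozen sub-cube is automatically bounded below by $Qf(x_k^j,M_k)$ up to the Scorza--Dragoni error.

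The one step you pass over too quickly is the boundary condition in the final stage. The theorem asserts $u_\epsilon\in u+W_0^{1,2}(\Omega,\mathbb{R}^m)$, but your piecewise-affine approximants $u^{(j)}$ do not in general share the trace of $u$ on $\partial\Omega$, so the fact that each oscillatory perturbation is compactly supported does not by itself give $u^{(j)}_\epsilon-u\in W_0^{1,2}$. The standard repair (as in Dacorogna's or Acerbi--Fusco's proofs) is to perform the construction on $\Omega_\eta$ and glue back to $u$ across a thin annulus near $\partial\Omega$ by a cut-off/De Giorgi slicing argument, using the upper bound in (\ref{mass}) and equi-integrability of $|\nabla u^{(j)}_\epsilon|^2$ to make the transition layer's contribution negligible; this should be stated explicitly, since it is exactly the point where the fixed-boundary-value version of the relaxation theorem differs from the free one.
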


 \end{document}